\documentclass[11pt, oneside, reqno]{amsart}   	
\usepackage{geometry}                		
\geometry{letterpaper}                   		
\usepackage{graphicx}	

\usepackage{amssymb}
\usepackage{amsmath}
\usepackage{amsthm}

\usepackage{lmodern}

\usepackage{mathrsfs}

\usepackage{setspace}
\usepackage{changepage}

\usepackage{xcolor}

\usepackage{tikz}
\usetikzlibrary{matrix,arrows,chains,positioning,scopes}
\tikzset{>=stealth}
\usepackage{tikz-cd}

\usepackage[labelfont=bf,textfont=it]{caption}

\newcommand{\R}{\mathbb{R}} 
\newcommand{\Z}{\mathbb{Z}}

\newcommand{\bbM}{\mathbb{M}}

\newcommand{\scrH}{\mathscr{H}}

\newcommand{\quand}{\quad \text{and} \quad}

\DeclareMathOperator{\diam}{diam} 
\DeclareMathOperator{\im}{im}

\newtheorem{thm}{Theorem}[section]
\newtheorem{prop}[thm]{Proposition}
\newtheorem{lem}[thm]{Lemma} 
\newtheorem{cor}[thm]{Corollary}

\theoremstyle{definition}
\newtheorem{defn}[thm]{Definition}

\newtheorem{ex}[thm]{Example}
\newtheorem*{rem}{Remark}

\geometry{margin=1.5in}


\title{Equidistant Sets on Alexandrov Surfaces}

\date{\today}

\author{Logan S. Fox} 
\address{Fariborz Maseeh Dept. of Math. \& Stat., Portland State University} 
\email{logfox@pdx.edu}

\author{J.J.P. Veerman}
\address{Fariborz Maseeh Dept. of Math. \& Stat., Portland State University}
\email{veerman@pdx.edu}

\begin{document} 

\begin{abstract} 
We examine properties of equidistant sets determined by nonempty disjoint compact subsets of a compact 2-dimensional Alexandrov space (of curvature bounded below). The work here generalizes many of the known results for equidistant sets determined by two distinct points on a compact Riemannian 2-manifold. Notably, we find that the equidistant set is always a finite simplicial 1-complex. These results are applied to answer an open question concerning the Hausdorff dimension of equidistant sets in the Euclidean plane. 
\end{abstract} 

\maketitle


\section{Introduction} 


Given two nonempty sets, say \(A\) and \(B\), in a metric space \(X\), we will denote by \(E(A,B)\) the set of points whose distance to \(A\) is the same as their distance to \(B\). 
That is, \(E(A,B)\) is the set of points \emph{equidistant} to \(A\) and \(B\). 
Perhaps unsurprisingly, the structure of the set \(E(A,B)\) depends not only on qualities of the sets \(A\) and \(B\), but also the properties of the ambient space \(X\). 
The goal of this paper is to extend work done in \cite{bv2006}, \cite{bv2007}, and \cite{herreros}, where many results are given for equidistant sets determined be pairs of points on compact Riemannian 2-manifolds. 
We generalize many of these results in two directions: (i) we consider equidistant sets determined by nonempty disjoint compact subsets; and (ii) the ambient space is a compact Alexandrov surface. 

Our work is aided in part by the observation that every point of the equidistant set (determined by \(A\) and \(B\)) admits at least two distinct shortest paths to \(A\cup B\) (one to \(A\) and one to \(B\)). As such, the equidistant set is contained in the \emph{cut locus} to \(A\cup B\). This allows us to make use of results concerning the cut locus to a compact set on an Alexandrov surface -- particluarly, those developed by Shiohama and Tanaka in \cite{ST1996}. Although equidistant sets are closely related to cut loci, we find that equidistant sets are in general much `nicer' (see Corollary \ref{cor: finite measure} and its preceding paragraph). 

This paper is organized as follows: In Section \ref{sec: prelim}, we review important properties of length spaces and Alexandrov spaces. 
In Section \ref{sec: equidistant}, we prove some basic facts concerning equidistant sets, particularly when \(X\) is a proper length space with no branching geodesics. 
Section \ref{sec: ES CBB} combines techniques from \cite{ST1996}, \cite{bv2006}, and \cite{bv2007}, to characterize equidistant sets on compact Alexandrov surfaces. 
Finally, in Section \ref{sec: the plane} we address the two open questions found in \cite{ponce} concerning equidistant sets in the Euclidean plane.


\section{Preliminaries} 
\label{sec: prelim}


This section collects necessary definitions and results concerning the fundamentals of Alexandrov spaces. With the exception of the doubling theorem, the content of this section can be found in the standard survey articles \cite{bgp92}, \cite{plaut}, and \cite{shiohama}, as well as the text \cite{bbi2001}.

\subsection{Length Spaces} 

A \emph{path} (or \emph{curve}) in a metric space \(X\) is a homeomorphism \(\gamma: [a,b] \to X\), where \([a,b]\) is an interval of \(\R\) (in other words, by path or curve we mean a Jordan arc). The \emph{length} of any path \(\gamma\) is denoted \(L(\gamma)\) and is the supremum of the distance along finite partitions of the path: 
\[ L(\gamma) = \sup \left\{ \sum_{k=1}^{n-1} d\big( \gamma(t_{k}), \gamma(t_{k+1}) \big) : a=t_1 < t_2<\cdots < t_n = b \right\} . \] 
If the length is finite, then the path is said to be \emph{rectifiable}.

A path \(\gamma\) is a \emph{shortest path} if \(d(\gamma(s),\gamma(t)) = |s-t|\) for all \(s,t\in [a,b]\). 
Note that shortest paths are distance minimizing and parametrized by arc length. 
A path is a \emph{geodesic} if it is locally a shortest path. 
Typically, we take the domain of a shortest path \(\gamma\) to be \([0,T]\) so that \(t=d \big( \gamma(0),\gamma(t) \big)\) for all \(t\in [0,T]\). 
However, it can be convenient to reparametrize a path to some other domain. 
Therefore, we say that a shortest path \(\gamma: [c,d] \to X\) is \emph{linearly parametrized} if \(d(\gamma(s),\gamma(t)) = \lambda |s-t|\) for all \(s,t\in [c,d]\), where \(\lambda\) is the constant 
\[ \lambda = \frac{d(\gamma(c),\gamma(d))}{d-c} . \]

A \emph{length space} is a metric space such that the distance between any pair of points is equal to the infimum of lengths of paths connecting those points. 
In an arbitrary length space, there may be no shortest path connecting a given pair of points; however, as longs as the space is complete and locally compact, the Hopf--Rinow theorem guarantees a shortest path always exists.

\begin{thm}[Hopf--Rinow]\label{hopf-rinow}
If \(X\) is a complete and locally compact length space, then every closed and bounded subset of \(X\) is compact, and any two points in \(X\) can be connected by a shortest path.
\end{thm}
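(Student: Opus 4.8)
The plan is to establish the two conclusions in order: first that $X$ is \emph{proper} (every closed bounded set is compact), and then, using properness, that shortest paths exist via a compactness argument for curves. Since every closed bounded set is a closed subset of some closed ball $\overline{B}(p,r)$, for the first conclusion it suffices to show that every such closed ball is compact. Fixing a basepoint $p$, I would consider
\[ \rho = \sup\{ r \ge 0 : \overline{B}(p,r) \text{ is compact} \}, \]
and aim to prove $\rho = \infty$. Local compactness makes this supremum positive, and since a closed ball of smaller radius is a closed subset of a compact larger ball, the set of admissible radii is an interval containing $0$.

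The heart of the matter is to rule out $\rho < \infty$, and this is where the length-space axiom does the real work (and is the step I expect to be the main obstacle). First I would show that, if $\rho < \infty$, the ball $\overline{B}(p,\rho)$ is itself compact. Being a closed subset of the complete space $X$ it is complete, so it is enough to verify total boundedness. Given $\epsilon > 0$ with $\epsilon < \rho$, for each $x \in \overline{B}(p,\rho)$ the length-space property supplies a path from $p$ to $x$ of length less than $\rho + \epsilon/2$; travelling back along this path by arc length $\epsilon$ locates a point $y$ with $d(p,y) \le \rho - \epsilon/2$ and $d(x,y) \le \epsilon$. These points $y$ lie in the compact ball $\overline{B}(p,\rho - \epsilon/2)$, which admits a finite $\epsilon$-net, and that net is then a finite $2\epsilon$-net for $\overline{B}(p,\rho)$. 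Hence $\overline{B}(p,\rho)$ is totally bounded, and therefore compact.

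Next I would \emph{open up} past $\rho$ to reach a contradiction. By local compactness, each $x \in \overline{B}(p,\rho)$ has a radius $r_x > 0$ with $\overline{B}(x,r_x)$ compact; by compactness of $\overline{B}(p,\rho)$ finitely many balls $B(x_i, r_{x_i}/2)$ cover it. For $\delta > 0$ small enough (a quarter of $\min_i r_{x_i}$ suffices), the length-space property forces any $z$ with $d(p,z) \le \rho + \delta$ to lie on an approximate geodesic from $p$ passing within $\tfrac12 r_{x_i}$ of some $x_i$, so that $z \in \overline{B}(x_i, r_{x_i})$. Thus $\overline{B}(p,\rho+\delta)$ is a closed subset of the finite union $\bigcup_i \overline{B}(x_i, r_{x_i})$ and so is compact, contradicting the definition of $\rho$. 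Hence $\rho = \infty$, and the first conclusion follows.

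For the second conclusion, given $x,y \in X$ with $d = d(x,y)$, I would pick paths $\gamma_n$ from $x$ to $y$ with $L(\gamma_n) \to d$, each linearly parametrized on $[0,1]$. Their speeds $L(\gamma_n)$ are bounded near $d$, so the $\gamma_n$ are uniformly Lipschitz (hence equicontinuous) and all lie in the closed ball $\overline{B}(x, \sup_n L(\gamma_n))$, which is compact by the first part. Arzel\`a--Ascoli then yields a uniformly convergent subsequence with limit $\gamma$, and lower semicontinuity of length under uniform convergence gives $L(\gamma) \le \liminf_n L(\gamma_n) = d$; since always $L(\gamma) \ge d(x,y) = d$, the curve $\gamma$ realizes the distance. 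After reparametrizing by arc length it is a shortest path, and it is injective (so a genuine Jordan arc) because a distance minimizer cannot backtrack. The main difficulty is concentrated in the first part; the second is a routine Arzel\`a--Ascoli argument once properness is in hand.
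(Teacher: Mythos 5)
The paper does not prove this statement: it is quoted as a classical background result (the Hopf--Rinow--Cohn-Vossen theorem), cited from the standard references, so there is no in-paper argument to compare against. Your proof is the standard one (essentially \cite[Theorem 2.5.28]{bbi2001}): the supremum-of-compact-radii argument for properness, with the length-space axiom used twice to push points of a slightly larger ball back into a known compact ball, and then Arzel\`a--Ascoli plus lower semicontinuity of length for the existence of minimizers. All the steps check out, including the choice \(\delta \leq \tfrac14 \min_i r_{x_i}\) in the ``opening up'' step and the observation that an arc-length-parametrized minimizer automatically satisfies \(d(\gamma(s),\gamma(t)) = |s-t|\) and hence is injective, as required by this paper's convention that paths are Jordan arcs.
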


In addition to the Hopf--Rinow theorem, we will also make use of a well-known consequence of the Arzel\'a--Ascoli theorem, which tells us that every sequence of shortest paths in a compact length space contains a convergent subsequence.

\begin{thm}[Arzel\`a--Ascoli] 
If \(X\) is a separable metric space and \(Y\) is a compact metric space, then every uniformly equicontinuous sequence of functions \(f_n: X\to Y\) has a subsequence that converges uniformly to a continuous function \(f:X\to Y\). 
\end{thm}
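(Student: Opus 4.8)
The plan is to run the two-stage Arzel\`a--Ascoli argument: extract a subsequence converging pointwise on a countable dense set by a diagonal procedure, and then use uniform equicontinuity together with a finite net to promote this to uniform convergence on all of \(X\) (and to see that the limit is continuous).

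First I would use separability to fix a countable dense set \(D=\{x_1,x_2,\dots\}\subseteq X\). Since \(Y\) is compact it is sequentially compact, so \((f_n(x_1))\) has a convergent subsequence; repeating the selection at \(x_2,x_3,\dots\) and passing to the diagonal subsequence \(g_j=f_{n_j}\) produces a single subsequence with \((g_j(x_k))\) convergent in \(Y\) for every \(k\). This Cantor diagonal step uses only the sequential compactness of \(Y\) and the countability of \(D\), and it is where the separability hypothesis is consumed.

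Next I would show \((g_j)\) is uniformly Cauchy. Given \(\epsilon>0\), uniform equicontinuity supplies \(\delta>0\) with \(d(g_j(x),g_j(x'))<\epsilon/3\) for every \(j\) whenever \(d(x,x')<\delta\). Covering \(X\) by finitely many \(\delta\)-balls centred at points \(x_{k_1},\dots,x_{k_m}\in D\), the finitely many convergent sequences \((g_j(x_{k_i}))\) are Cauchy, so some \(J\) makes \(d(g_i(x_{k_\ell}),g_j(x_{k_\ell}))<\epsilon/3\) for all \(i,j\ge J\) and all \(\ell\). For arbitrary \(x\in X\), choosing a centre \(x_{k_\ell}\) within \(\delta\) of \(x\) and chaining the triangle inequality through \(g_i(x),\,g_i(x_{k_\ell}),\,g_j(x_{k_\ell}),\,g_j(x)\) gives \(d(g_i(x),g_j(x))<\epsilon\) for all \(i,j\ge J\). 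Hence \((g_j)\) is uniformly Cauchy; since \(Y\) is compact, hence complete, it converges uniformly to some \(f:X\to Y\), and the same \(\delta\) shows \(f\) inherits the common modulus of continuity, so \(f\) is continuous.

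The main obstacle is precisely this promotion from convergence on \(D\) to uniform convergence on all of \(X\), and within it the single step that covers \(X\) by \emph{finitely many} \(\delta\)-balls. This is where the argument genuinely asks for more than the density of \(D\): such a finite \(\delta\)-net is available exactly when the domain is totally bounded. In every place the theorem is invoked here the relevant maps are shortest paths, which after linear reparametrization share the compact domain \([0,T]\); that domain is totally bounded, the finite net exists, and the construction above delivers the required uniformly convergent subsequence with continuous limit.
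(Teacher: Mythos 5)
The paper states this theorem without proof, citing it as classical, so there is no in-paper argument to compare against; your two-stage proof (diagonal extraction over a countable dense subset, then promotion to uniform convergence via equicontinuity and a finite net) is the standard route, and your execution of both stages is correct. Moreover, you have put your finger on the one genuine issue, which is a defect of the statement itself rather than of your argument: separability of \(X\) does not supply the finite \(\delta\)-net, and the theorem as literally stated is false. For a counterexample take \(X=\R\), \(Y=[0,1]\), and the \(1\)-Lipschitz (hence uniformly equicontinuous) bumps \(f_n(x)=\max\{0,\,1-|x-n|\}\): every subsequence converges pointwise to \(0\), yet each \(f_{n_k}\) has supremum \(1\), so no subsequence converges uniformly on \(X\). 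Under the stated hypotheses the honest conclusion is uniform convergence on compact subsets only; the full conclusion requires \(X\) totally bounded (in particular compact), exactly as your finite-net step demands.

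Your closing observation is the right repair and deserves emphasis: the only way the theorem enters the paper is through Corollary \ref{cor: arzela}, where the functions are linearly parametrized shortest paths with common compact domain \([0,1]\). There the finite \(\delta\)-net exists, your uniform-Cauchy chaining through \(g_i(x),\,g_i(x_{k_\ell}),\,g_j(x_{k_\ell}),\,g_j(x)\) goes through, completeness of the compact target gives the uniform limit, and the shared modulus of continuity passes to the limit. So your proof fully justifies everything the paper actually uses, while correctly diagnosing that the hypothesis ``separable'' in the displayed statement should read ``totally bounded'' (or the conclusion should be weakened to local uniform convergence).
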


\begin{cor}\label{cor: arzela} 
If \(X\) is a compact length space and \(\{\gamma_n\}_n\) is a sequence of linearly parametrized shortest paths, \(\gamma_n : [0,1]\to X\), then \(\{\gamma_n\}_n\) contains a subsequence which converges uniformly to a linearly parametrized shortest path. 
\end{cor}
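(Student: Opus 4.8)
The plan is to combine the two named theorems: use Arzel\`a--Ascoli to extract a uniformly convergent subsequence, and then verify that the limit function inherits the shortest-path property. Since we are dealing with shortest paths, the natural first step is to confirm that the hypotheses of Arzel\`a--Ascoli are met. A compact length space is separable (compact metric spaces are separable), so $X$ serves as both the source space $[0,1]$ (compact, hence separable) and the target $Y=X$ (compact). The real content is the equicontinuity of the family $\{\gamma_n\}_n$.

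First I would establish uniform equicontinuity. For a linearly parametrized shortest path $\gamma_n:[0,1]\to X$, we have $d(\gamma_n(s),\gamma_n(t))=\lambda_n|s-t|$, where $\lambda_n=d(\gamma_n(0),\gamma_n(1))$ is the length of $\gamma_n$. Because $X$ is compact, $\diam(X)<\infty$, so every $\lambda_n\le \diam(X)=:D$. Hence $d(\gamma_n(s),\gamma_n(t))\le D|s-t|$ for all $n$ and all $s,t\in[0,1]$; the family is uniformly Lipschitz with a common constant $D$, which gives uniform equicontinuity at once. Arzel\`a--Ascoli then provides a subsequence, which I relabel $\{\gamma_n\}_n$, converging uniformly to a continuous map $\gamma:[0,1]\to X$.

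Next I would identify the limit as a linearly parametrized shortest path. The defining relation is $d(\gamma(s),\gamma(t))=\lambda|s-t|$ with $\lambda=d(\gamma(0),\gamma(1))$. This follows by passing to the limit in $d(\gamma_n(s),\gamma_n(t))=\lambda_n|s-t|$: uniform (indeed pointwise) convergence of $\gamma_n\to\gamma$ together with continuity of the distance function $d$ gives $d(\gamma_n(s),\gamma_n(t))\to d(\gamma(s),\gamma(t))$ for each fixed $s,t$. After passing to a further subsequence so that the bounded sequence $\lambda_n$ converges to some $\lambda_\infty$, taking $n\to\infty$ yields $d(\gamma(s),\gamma(t))=\lambda_\infty|s-t|$. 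Setting $s=0,t=1$ identifies $\lambda_\infty=d(\gamma(0),\gamma(1))=\lambda$, so $\gamma$ satisfies the definition of a linearly parametrized shortest path.

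The main obstacle, and the only subtle point, is the degenerate case in which $\lambda_\infty=0$, i.e. the endpoints collapse: the limit $\gamma$ is then a constant map. I would address this by adopting the convention that a constant path is a (trivial, degenerately parametrized) shortest path, so the statement holds as written; alternatively, under the standing hypotheses one may note that the relevant sequences of shortest paths arising in the applications have lengths bounded away from zero, ruling out degeneracy. Everything else is a routine limiting argument using only the uniform Lipschitz bound and continuity of $d$, so I expect no serious difficulty beyond bookkeeping the nested subsequences.
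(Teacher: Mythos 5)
Your argument is correct and is exactly the standard one that the paper leaves implicit (the corollary is stated without proof as a ``well-known consequence'' of Arzel\`a--Ascoli): the uniform Lipschitz bound \(\lambda_n \le \diam X\) gives equicontinuity, and passing to the limit in \(d(\gamma_n(s),\gamma_n(t))=\lambda_n|s-t|\) identifies the limit. Your caveat about the degenerate constant-path case is well taken but harmless here, since in every application in the paper (e.g.\ Lemma \ref{lem: disjoint directions}) the lengths \(\lambda_n\) are bounded away from zero.
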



\subsection{Alexandrov Spaces} 

We will denote by \(\bbM^2_k\) the complete \(2\)-dimensional Riemannian manifold of constant sectional curvature \(k\). 
Letting \(X\) be a length space and \(x,y,z\in X\) be three distinct points satisfying 
\[ d(x,y) + d(x,z) + d(y,z) \leq 2 \diam \bbM^2_k = \begin{cases} 2\pi / \sqrt{k} & \text{ if } k > 0 \\ \infty & \text{ if } k \leq 0 \end{cases} \] 
we can isometrically embed these points as the vertices of a geodesic triangle in \(\bbM^2_k\), which we will denote \(\Delta^k (x,y,z)\). 
We then denote by \(\angle^k(y,z)\) the angle at \(x\) in \(\Delta^k(x,y,z)\). 

Given two shortest paths emanating from the same point, say \(\gamma: [0,T] \to X\) and \(\eta:[0,S]\to X\) with \(\gamma(0) = \eta(0)\), the \emph{upper angle} between \(\gamma\) and \(\eta\) is defined by fixing a \(k\) and taking the limit superior of angles \(\angle^k \big( \gamma(t),\eta(s) \big)\) as \(s\) and \(t\) vanish, 
\begin{equation}\label{eq: angle} 
\angle^+(\gamma,\eta) = \limsup_{s,t\to 0^+} \angle^k \big( \gamma(t),\eta(s) \big) . 
\end{equation} 
One similarly defines the lower angle by taking the limit inferior in \eqref{eq: angle}. If the upper and lower angles are the same, then we say that the angle exists and simply denote it \(\angle(\gamma, \eta)\). 

If there is a \(k\) such that for any \(s,t>0\) and any shortest paths \(\gamma\) and \(\eta\) as above, we find \(\angle^+(\gamma , \eta) \geq \angle^k\big( \gamma(t),\eta(s) \big)\), then we say that \(X\) is a space of \emph{curvature bounded below} (or curvature \(\geq k\)).\footnote{On the other hand, if \(\angle^+(\gamma , \eta) \leq \angle^k\big( \gamma(t),\eta(s) \big)\), then we say that \(X\) is a space of curvature bounded above, but we will not consider such a space in this article.} 
It is well known that the bounded curvature condition guarantees the existence of angles between shortest paths. 

For the purposes of this article, we define Alexandrov spaces as follows. 

\begin{defn}\label{def: alexandrov}
An \emph{Alexandrov space} is a complete and locally compact length space of curvature bounded below. 
\end{defn}

In general it is not required that Alexandrov spaces are locally compact. However, we will rely heavily on the Hopf--Rinow theorem, so we define Alexandrov spaces accordingly. In fact, the later sections further assume that the spaces in question are \(2\)-dimensional, which we will call \emph{Alexandrov surfaces}. 

When referring to the dimension of an Alexandrov space, we generally mean the Hausdorff dimension (although this is known to be equivalent to topological dimension \cite[Theorem 156]{plaut}). 
See \cite[Chapter 10]{bbi2001} -- the source of each item of the following proposition -- for a more complete discussion.

\begin{prop}[\cite{bbi2001}]\label{prop: dimension}
Let \(X\) be an Alexandrov space. \\ 
(i) The Hausdorff dimension of \(X\) is a nonnegative integer or infinity. \\ 
(ii) All open subsets of \(X\) have the same Hausdorff dimension. \\ 
(iii) If \(X\) is \(2\)-dimensional, then it is a topological \(2\)-manifold, possibly with boundary. 
\end{prop}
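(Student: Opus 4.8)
The plan is to follow the strainer-based development of dimension in Alexandrov geometry. Fix a curvature lower bound $k$ for $X$. Call a collection of pairs of points $\{(a_i,b_i)\}_{i=1}^n$ an $(n,\delta)$\emph{-strainer} at a point $p$ if each pair is nearly antipodal as seen from $p$ --- the comparison angle at $p$ in $\bbM^2_k$ determined by $a_i$ and $b_i$ exceeds $\pi-\delta$ --- while distinct pairs are nearly orthogonal, each comparison angle at $p$ between $a_i$ (or $b_i$) and $a_j$ (or $b_j$) exceeding $\tfrac{\pi}{2}-\delta$ for $i\neq j$. The engine of the whole argument is the following local chart lemma: if $\delta$ is small enough and $p$ admits an $(n,\delta)$-strainer, then the distance-coordinate map $f(x)=(d(a_1,x),\dots,d(a_n,x))$ restricts to a bi-Lipschitz homeomorphism from a neighborhood of $p$ onto an open subset of $\R^n$. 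I expect this lemma to be the main obstacle: it is where the curvature-bounded-below hypothesis does all of its work. Each coordinate is $1$-Lipschitz automatically, so the content is the reverse (co-Lipschitz) estimate and openness of the image, and these follow from the monotonicity of comparison angles and the first-variation formula for distance functions --- both consequences of the curvature lower bound (Definition \ref{def: alexandrov}) --- together with the Hopf--Rinow theorem (Theorem \ref{hopf-rinow}) to produce the shortest paths along which one differentiates.

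Granting the chart lemma, parts (i) and (ii) become bookkeeping. Define the \emph{strainer number} of $X$ to be the supremum of all $n$ for which some point admits an $(n,\delta)$-strainer with $\delta$ arbitrarily small. If this supremum is $\infty$, the Hausdorff dimension is infinite; otherwise let $n$ be its finite value. At any point carrying a maximal $(n,\delta)$-strainer the chart lemma exhibits a neighborhood bi-Lipschitz to an open subset of $\R^n$, so the Hausdorff dimension there is exactly the integer $n$, and one checks that the set of such points is open and dense. This gives (i). For (ii), the same charts show the local Hausdorff dimension equals the strainer number on this dense open set; since bi-Lipschitz maps preserve Hausdorff dimension and every nonempty open subset meets the maximal-strainer locus, each such subset has dimension exactly $n$.

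For (iii) I pass to the infinitesimal picture. At each $p$ one forms the space of directions $\Sigma_p$, the completion of the geodesic germs at $p$ under the angle metric, together with its Euclidean cone, the tangent cone. The key structural input is that $\Sigma_p$ is itself a compact Alexandrov space, now of curvature $\geq 1$ and of dimension one less than $X$; when $\dim X = 2$ this makes $\Sigma_p$ a $1$-dimensional Alexandrov space of curvature $\geq 1$, and such spaces are classified: each is either a circle of length at most $2\pi$ or a closed arc of length at most $\pi$. The cone over a circle is homeomorphic to $\R^2$ and the cone over a closed arc to a closed half-plane. Finally, in dimension two one has a genuine local homeomorphism between a neighborhood of $p$ and its tangent cone, so every point of $X$ has a neighborhood homeomorphic either to $\R^2$ or to a half-plane; this is precisely the assertion that $X$ is a topological $2$-manifold, possibly with boundary. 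The delicate point here is upgrading the infinitesimal cone description to an honest local homeomorphism --- achievable by hand in dimension two, though requiring Perelman's stability theorem in general.
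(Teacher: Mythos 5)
The paper does not actually prove this proposition; it is imported wholesale from Chapter 10 of \cite{bbi2001}, and your outline is a reconstruction of precisely that strainer-based development. So in approach you are aligned with the source the paper leans on: the bi-Lipschitz distance-coordinate chart at a strained point, the identification of Hausdorff dimension with the strainer number, and for (iii) the classification of one-dimensional spaces of curvature $\geq 1$ as arcs of length $\leq \pi$ or circles of length $\leq 2\pi$, followed by the cone description of a neighborhood. You are also right to flag the upgrade from tangent cone to local homeomorphism as the delicate point of (iii); in dimension two it can be done by hand (as in \cite{bgp92}) without invoking the stability theorem.

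There is, however, one step that fails as written. For the upper bound on Hausdorff dimension in (i) and (ii) you argue that the maximal-strainer locus is open and dense and is covered by bi-Lipschitz charts to open subsets of $\R^n$, and conclude that every open set has dimension exactly $n$. Density plus charts gives only the \emph{lower} bound. The complement of the strained locus is closed and nowhere dense, but a closed nowhere dense subset of a metric space can have Hausdorff dimension strictly larger than that of its dense complement, and nothing in your sketch rules this out for the non-strained set. (That the singular set of an $n$-dimensional Alexandrov space has dimension at most $n-1$ is a genuine theorem, not a consequence of density.) In \cite{bbi2001} the inequality $\dim_{\scrH} X \leq n$ comes from a separate mechanism: a covering-number induction on the strainer number, showing that a bounded set containing no $(m,\delta)$-strained points has controlled rough volume, hence rough dimension --- and a fortiori Hausdorff dimension --- bounded in terms of $m$. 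The same counting argument is what delivers the density of the maximal-strainer locus in \emph{every} open subset, which you need for (ii) and which is not something one simply ``checks.'' Those two points are where the remaining content of (i) and (ii) lives; the rest of your outline is sound.
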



\subsection{No Branching Geodesics} 

One important consequence of the lower curvature bound condition, is that it removes the possibility of branching geodesics. 
Intuitively, two geodesics which emanate from the same point are said to \emph{branch} if they initially overlap for some time interval, but then become disjoint at some point on the interior of both paths. 

\begin{defn}\label{defn: branch}
Let \(X\) be a length space and let \(\gamma,\eta:[0,1]\to X\) be linearly parametrized shortest paths with \(\gamma(0) = \eta(0)\). 
If there are values \(t,s\in (0,1)\) such that 
\[ \gamma\big( [0,t] \big) = \eta\big( [0,s] \big) \quad \text{and} \quad \gamma\big([t,t+\varepsilon]\big) \cap \eta\big([s,s+\varepsilon]\big) = \gamma(t) \] 
for some \(\varepsilon>0\), then \(\gamma\) and \(\eta\) are said to \emph{branch}. The point \(\gamma(t)\) is the \emph{branch point} between \(\gamma\) and \(\eta\). 
\end{defn}

\begin{ex}
Consider a length space formed by two cones with their vertices identified. Any pair of shortest paths which begin at the same point on one cone and end at distinct equidistant points on the other cone will branch at the shared vertex. 
\end{ex}

\begin{lem}[{\cite[Lemma 2.4]{shiohama}}]\label{lem: no branch}
If \(X\) is an Alexandrov space, then there are no branching geodesics in \(X\). 
\end{lem}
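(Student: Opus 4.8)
The plan is to argue by contradiction using the \emph{point-on-side} form of the lower curvature bound, which is the standard reformulation of the angle comparison used to define curvature $\geq k$: if $X$ has curvature $\geq k$, then for any shortest path $[xz]$, any point $p$ on $[xz]$, and any $y\in X$ (with the relevant triangle admitting a comparison), one has $d(y,p)\geq |\tilde y\tilde p|$, where $\tilde\Delta^k(x,y,z)\subset\bbM^2_k$ is the comparison triangle and $\tilde p\in[\tilde x\tilde z]$ satisfies $|\tilde x\tilde p|=d(x,p)$. Intuitively, triangles in a space of curvature $\geq k$ are at least as ``fat'' as their model counterparts, so cevians are no shorter than in the model. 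That this is equivalent to the angle-comparison condition is a standard fact from the references collected in this section.

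Suppose, for contradiction, that $\gamma$ and $\eta$ branch at $p=\gamma(t)=\eta(s)$ in the sense of Definition \ref{defn: branch}, with associated $\varepsilon>0$. First I would select a small $\delta>0$ and three auxiliary points: a point $x$ on the common initial arc $\gamma\big([0,t]\big)=\eta\big([0,s]\big)$ lying at arc-length distance $\delta$ before $p$, together with $y=\gamma(t+\delta)$ and $z=\eta(s+\delta)$. Shrinking $\delta$ if necessary (so that $\delta$ is smaller than $\varepsilon$, smaller than the length of the common arc, and small enough that the comparison triangle exists when $k>0$), the branching hypothesis $\gamma\big([t,t+\varepsilon]\big)\cap\eta\big([s,s+\varepsilon]\big)=\{p\}$ guarantees $y\neq z$, while $y,z\neq p$. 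Because $x,p,y$ lie in order on the shortest path $\gamma$ and $x,p,z$ lie in order on the shortest path $\eta$, reparametrizing by arc length gives $d(x,p)=d(p,y)=d(p,z)=\delta$ and $d(x,y)=d(x,z)=2\delta$; in particular $p$ is the midpoint of the shortest path $[xz]$.

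Next I would apply the point-on-side comparison to the shortest path $[xz]$, its midpoint $p$, and the external point $y$, obtaining $d(y,p)\geq|\tilde y\tilde p|$ in the comparison triangle $\tilde\Delta^k(x,y,z)$, where $\tilde p$ is the midpoint of $[\tilde x\tilde z]$. Since $d(x,y)=d(x,z)=2\delta$ and $d(y,z)>0$, this model triangle is a genuine (non-degenerate) isosceles triangle, and $\tilde y\tilde p$ is the cevian from the apex $\tilde y$ to the midpoint of the opposite side. A direct computation in $\bbM^2_k$ -- via the median-length formula when $k=0$, and via the law of cosines when $k\neq 0$ -- shows that this cevian length is a strictly increasing function of the base $|\tilde y\tilde z|$, equal to $\delta$ exactly when the base degenerates to a point. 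Hence $|\tilde y\tilde p|>\delta$. Combining, $\delta=d(p,y)\geq|\tilde y\tilde p|>\delta$, a contradiction; therefore no branching geodesics exist.

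The routine-but-essential step is the final model-space estimate $|\tilde y\tilde p|>\delta$, together with checking it uniformly in $k$ for small $\delta$ (where one must ensure the comparison triangle is defined when $k>0$). The only conceptual input is the equivalence between the angle-comparison definition of curvature $\geq k$ and the point-on-side comparison, so the argument is dimension-free and does not require $X$ to be a surface; in the $2$-dimensional case one could alternatively argue inside the space of directions $\Sigma_p$, a circle of length at most $2\pi$, where the two forward directions must both be antipodal to the shared backward direction and hence coincide, but this reasoning is special to surfaces, so I prefer the comparison argument above.
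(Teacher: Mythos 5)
The paper offers no proof of this lemma: it is imported verbatim from Shiohama's survey as \cite[Lemma 2.4]{shiohama}, so there is no in-paper argument to compare against. Your proof is correct, self-contained, and is essentially the standard argument (indeed close to the one in the cited source): pass from the angle-comparison definition to the equivalent point-on-side (Alexandrov convexity) form, place \(x\) at arc length \(\delta\) before the branch point \(p\) and \(y,z\) at arc length \(\delta\) beyond it along the two branches, and let the comparison force \(\delta = d(y,p) \geq |\tilde y\tilde p| > \delta\). Two small refinements. First, the strict model-space inequality \(|\tilde y\tilde p|>\delta\) needs no median formula or case analysis in \(k\): by the triangle inequality in \(\bbM^2_k\) one has \(|\tilde y\tilde p|\geq|\tilde x\tilde y|-|\tilde x\tilde p|=\delta\), with equality only if \(\tilde p\) lies on the segment \([\tilde x\tilde y]\); since \(d(y,z)>0\) forces a positive angle at \(\tilde x\) and geodesics between nearby points of \(\bbM^2_k\) are unique, \(\tilde p\in[\tilde x\tilde z]\setminus\{\tilde x\}\) cannot lie on \([\tilde x\tilde y]\), so the inequality is strict. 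Second, your claim that \(x,p,z\) lie in order on \(\eta\) uses that \(\gamma\) and \(\eta\) traverse the common initial arc identically; this is true because that common image is a single Jordan arc with \(\gamma(0)=\eta(0)\) as an endpoint and both paths are parametrized proportionally to arc length from it, but it deserves the one-line justification. The only external input you rely on is the equivalence of the angle comparison with the point-on-side comparison, which is standard and available in the references the paper already cites, so the argument would stand as a legitimate replacement for the citation.
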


\begin{cor}\label{cor: subpath} 
Let \(X\) be an Alexandrov space. For any shortest paths \(\gamma : [0,T] \to X\) and \(\eta : [0,S] \to X\), with \(\gamma(0) = \eta(0)\), \(\angle(\gamma,\eta) = 0\) if and only if one path is a subpath of the other. 
\end{cor}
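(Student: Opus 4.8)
The plan is to prove both implications by examining the comparison triangles $\Delta^k(\gamma(0),\gamma(t),\eta(s))$ and exploiting the fact that a vanishing angle at a vertex of a geodesic triangle in $\bbM^2_k$ forces that triangle to degenerate onto a single geodesic segment.

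For the ``if'' direction, suppose without loss of generality that $\eta$ is a subpath of $\gamma$; since both are parametrized by arc length and $\gamma(0)=\eta(0)$, this means $\eta(s)=\gamma(s)$ for all $s\in[0,S]$ with $S\le T$. Then for small $s,t$ the three points $\gamma(0)$, $\gamma(t)$, $\eta(s)=\gamma(s)$ lie on the single shortest path $\gamma$, so their pairwise distances are $t$, $s$, and $|t-s|$. The comparison triangle with these side lengths is degenerate, whence $\angle^k(\gamma(t),\eta(s))=0$. Taking the limit superior gives $\angle^+(\gamma,\eta)=0$, and since angles exist on a space of curvature bounded below, $\angle(\gamma,\eta)=0$.

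For the ``only if'' direction, assume $\angle(\gamma,\eta)=0$. The defining inequality for curvature bounded below gives $0=\angle^+(\gamma,\eta)\ge \angle^k(\gamma(t),\eta(s))\ge 0$ for all sufficiently small $s,t$, hence $\angle^k(\gamma(t),\eta(s))=0$. A geodesic triangle in $\bbM^2_k$ with a zero angle is degenerate, so $d(\gamma(t),\eta(s))=|t-s|$; setting $s=t$ yields $\gamma(t)=\eta(t)$ for all small $t$. Thus the paths coincide on a nontrivial initial interval, and I set $t_0=\sup\{\tau:\gamma=\eta\text{ on }[0,\tau]\}$, which is positive, and assume for contradiction that $t_0<\min(S,T)$. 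By continuity $\gamma=\eta$ on $[0,t_0]$; I then rerun the degenerate-triangle argument on the diagonal for parameters just past $t_0$ to force $\gamma(\tau)=\eta(\tau)$ on $[0,t_0+h]$ for some $h>0$, contradicting the maximality of $t_0$. Hence $t_0=\min(S,T)$ and the shorter path is a subpath of the longer one. (Equivalently, one may phrase the final step through Lemma~\ref{lem: no branch}: if $t_0<\min(S,T)$ then $\gamma$ and $\eta$ would branch at $\gamma(t_0)$, which is impossible.)

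The main obstacle is the case $k>0$, where $\Delta^k(\gamma(0),\gamma(t),\eta(s))$ is defined only when its perimeter does not exceed $2\pi/\sqrt k$; for parameters near a large $t_0$ this is not automatic. I expect to resolve this by noting that once coincidence on $[0,t_0]$ is known, the relevant triangles satisfy $d(\gamma(t),\eta(s))\le (t-t_0)+(s-t_0)$, so their perimeter is at most $2t_0+4h$, and since the standard diameter bound for spaces of curvature $\ge k>0$ forces $\diam X\le \pi/\sqrt k$ (so $t_0<\pi/\sqrt k$), this stays below $2\pi/\sqrt k$ for small $h$. Keeping the argument within this regime is exactly what rules out the pathological possibility of the two paths merely touching along a sequence of points accumulating at $\gamma(t_0)$ without coinciding on an interval.
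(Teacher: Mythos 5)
Your proof is correct, and it fills in exactly the argument the paper leaves implicit when it states this as an unproved corollary of Lemma~\ref{lem: no branch}: comparison-triangle rigidity (zero comparison angles force $d(\gamma(t),\eta(s))=|t-s|$, hence initial coincidence) followed by an extension/no-branching step, with the perimeter constraint for $k>0$ handled correctly via the diameter bound. No gaps.
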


Note that the converse of Lemma \ref{lem: no branch} is not necessarily true, even for compact length spaces, as the following example illustrates.

\begin{ex} 
Let \(X\) be the surface of revolution in \(\R^3\) obtained by rotation the graph of \(z=\sqrt{x}\), for \(0 \leq x \leq 1\), around the \(z\)-axis. 
In other words, 
\[ X = \left\{ (x,y,z)\in \R^3 : z = \sqrt[4]{x^2 + y^2} \ , \ x^2 + y^2 \leq 1 \right\} . \] 
Since \(X\) is the continuous image of a compact set, it is compact. First equipping \(X\) with the subspace metric induced by the Euclidean norm, we can then equip \(X\) with an intrinsic metric by defining \(d(x,y)\) to be the infimum of lengths of paths from \(x\) to \(y\). This space does not have branching geodesics (and no geodesic contains the singular point in its interior); however, there is no lower curvature bound, which we can see by examining a triangle with one vertex at the singular point \((0,0,0)\). 
\end{ex}


\subsection{The Space of Directions}

Let \(X\) be an Alexandrov space. For any fixed \(x\in X\), let \(\Gamma_x\) be the set of all shortest paths emanating from \(x\). 
We define an equivalence relation on \(\Gamma_x\) by 
\[ \gamma \sim \eta \iff \angle (\gamma , \eta) = 0 \] 
and let \(\Sigma_x\) be the set of equivalence classes of \(\Gamma_x / \sim\). 
It is straightforward to verify that the angle is a metric on \(\Sigma_x\). 
The \emph{space of directions}, \(S_xX\), is then defined as the completion of \((\Sigma_x , \angle)\). 
For any shortest path \(\gamma\), we will denote by \([\gamma]\) (or simply \(\gamma\) when there is no ambiguity) its equivalence class in \(S_xX\).

If \(X\) is an \(n\)-dimensional Alexandrov space (with \(n\geq 2\)) then for every \(x\in X\), the space of directions \(S_xX\) is itself a compact \((n-1)\)-dimensional Alexandrov space of curvature \(\geq 1\) \cite[Theorem 10.8.6]{bbi2001}. 
For \(2\)-dimensional Alexandrov spaces, this tells us that the space of directions at every point is either a line segment or a circle. Furthermore, by the radius sphere theorem \cite{GP1993}, \(\diam S_xX \leq \pi\).

The \emph{tangent cone} (or tangent space) at \(x\), which we denote \(T_xX\), is the Euclidean cone over \(S_xX\): 
\[ T_xX = \big( S_xX \times [0,\infty) \big) / \big( S_xX \times \{0\} \big) . \] 
We equip the tangent cone with the metric induced by the law of cosines, 
\[ d \big( (\gamma,t) , (\eta,s) \big) = \sqrt{ t^2 + s^2 - 2st\cos\big( \angle(\gamma , \eta) \big) } . \] 
When \(X\) is an Alexandrov space of curvature \(\geq k\), \(T_xX\) is an Alexandrov space of curvature \(\geq 0\) for all \(x\in X\). 
Furthermore, if \(X\) is \(n\)-dimensional, then so is \(T_xX\) (this follows from the dimension of \(S_xX\)). 

There is a notion of exponential map, \(\exp_x : T_xX \to X\), defined simply by 
\[ \exp_x (\gamma,t) = \gamma(t) . \] 
However, this assumes there is in fact a shortest path \(\gamma\) at \(x\) with direction \([\gamma]\) and \(L(\gamma)\geq t\). 
In general, there may be no neighborhood of the origin such that \(\exp_x\) is defined for all points in the neighborhood (see Example \ref{ex: flat sphere}). We will make limited use of the exponential map in Section \ref{sec: ES CBB}.


\subsection{The Doubling Theorem} 
\label{subsec: doubling}

Let \(X\) be an \(n\)-dimensional Alexandrov space with nonempty boundary \(\partial X\), and let \(\phi: X \to Y\) be an isometry. We define the doubling of \(X\), which we denote \(\widetilde{X}\), as the gluing of \(X\) with itself (i.e. with \(Y\)) along the boundary, 
\[ \widetilde{X} = X \cup_{\phi |_{\partial X}} Y . \] 
After identifying \(\partial X\) with \(\phi(\partial X)\), we can equip the doubled space with the metric 
\[ \tilde{d}(x,y) = \begin{cases} 
d_X(x,y) & \text{ if } x,y\in X \\ 
d_Y(x,y) & \text{ if } x,y\in Y \\ 
\inf \{ d(x,z) + d(y,z) : z\in \partial X \} & \text{ if } x\in X \text{ and } y\in Y . 
\end{cases} \] 
It is straightforward to see that \(\widetilde{X}\) is a length space, but it is in fact also an \(n\)-dimensional Alexandrov space of curvature \(\geq k\). 

\begin{thm}[Doubling Theorem \cite{perelman}]\label{thm: doubling} 
Given an \(n\)-dimensional Alexandrov space \(X\) of curvature \(\geq k\) (for some \(k\in\R\)) with nonempty boundary, the doubled space \(\widetilde{X}\) is an \(n\)-dimensional Alexandrov space of curvature \(\geq k\) with empty boundary. 
\end{thm}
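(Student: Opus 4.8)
The plan is to verify in turn the four assertions packaged into the statement: that $\widetilde{X}$ is a geodesic length space, that it is $n$-dimensional, that it has curvature $\geq k$, and that its boundary is empty. The first is the routine part alluded to in the surrounding text: one checks directly that $\tilde{d}$ satisfies the triangle inequality, and since $\widetilde{X}$ is complete and locally compact, Hopf--Rinow (Theorem \ref{hopf-rinow}) supplies shortest paths. A shortest path joining $x\in X$ to $y\in Y$ must meet $\partial X$, and its crossing point is exactly a minimizer of the infimum defining $\tilde{d}(x,y)$. For the dimension, observe that $\widetilde{X}$ is the union of two isometric copies of $X$; the interior of either copy is an open subset of $\widetilde{X}$ isometric to an open subset of $X$, hence of Hausdorff dimension $n$, so by Proposition \ref{prop: dimension}(ii) every open subset of $\widetilde{X}$ (including $\widetilde{X}$ itself) has dimension $n$.

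The heart of the matter is the lower curvature bound. By Toponogov's globalization theorem it suffices to establish the comparison condition \emph{locally}, in a neighborhood of each point. At a point lying in the interior of one of the two copies, a small enough neighborhood is isometric to a neighborhood in $X$, so the bound is inherited directly. Everything therefore reduces to a point $p\in\partial X$ on the gluing locus. The key structural observation is that the infinitesimal geometry of $\widetilde{X}$ at $p$ is itself a doubling: directions at $p$ split into those pointing into the $X$-copy, those pointing into the $Y$-copy, and those tangent to $\partial X$, and the latter form the common boundary $\partial(S_pX)=S_p(\partial X)$. Consequently
\[ S_p\widetilde{X} \;=\; \widetilde{S_pX} \qquand T_p\widetilde{X} \;=\; \widetilde{T_pX}, \]
the doubling of the space of directions and of the tangent cone of $X$ at $p$.

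This identification sets up an induction on dimension. In dimension $1$ the claim is immediate: a $1$-dimensional Alexandrov space with boundary is a segment, whose doubling is a circle, trivially of curvature $\geq k$. For the inductive step, $S_pX$ is an $(n-1)$-dimensional Alexandrov space of curvature $\geq 1$ with boundary, so by the induction hypothesis its doubling $\widetilde{S_pX}=S_p\widetilde{X}$ has curvature $\geq 1$; equivalently, $T_p\widetilde{X}$ has curvature $\geq 0$. It remains to promote this infinitesimal bound to the genuine comparison inequality for small geodesic triangles in $\widetilde{X}$ with a vertex near $p$. Triangles avoiding the gluing locus are handled within one copy; for a triangle whose sides cross $\partial X$, the behaviour of a shortest path at a crossing point $z\in\partial X$ is governed by the geometry of $S_z\widetilde{X}=\widetilde{S_zX}$, so that the relevant comparison angle is controlled by a space now known to have curvature $\geq 1$. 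Emptiness of the boundary is a byproduct of the same identification: boundary points of an Alexandrov space are detected by their spaces of directions having nonempty boundary, and $S_p\widetilde{X}=\widetilde{S_pX}$ has empty boundary, while interior points of the two copies are manifestly interior in $\widetilde{X}$.

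I expect the final step---converting the nonnegatively curved tangent cone into an honest local comparison inequality across the seam $\partial X$---to be the principal obstacle. The difficulty is that distances in $\widetilde{X}$ are \emph{not} simply distances in $X$ after reflecting the $Y$-portion of a path into the $X$-copy, since the reflected minimizer need not touch $\partial X$; hence the comparison for triangles straddling $\partial X$ cannot be read off from $X$ alone and genuinely requires the infinitesimal analysis at every seam point, fed into the globalization machinery. This is exactly the content of Perelman's argument, which underlies the citation in Theorem \ref{thm: doubling}.
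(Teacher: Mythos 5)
This theorem is not proved in the paper at all: it is quoted as an external result of Perelman, with the remark that it also follows from Petrunin's gluing theorem. So there is no in-paper argument to compare against; your proposal has to stand on its own as a proof, and as written it does not.

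The easy parts (length-space structure, completeness and local compactness, dimension via Proposition \ref{prop: dimension}(ii), and the reduction by globalization to a local comparison at seam points) are fine. The two load-bearing steps, however, are asserted rather than proved. First, the identification \(S_p\widetilde{X}=\widetilde{S_pX}\) at a boundary point \(p\) is itself a nontrivial structure result: it presupposes that \(\partial(S_pX)\) coincides with the set of directions tangent to \(\partial X\), and that every direction in \(\widetilde{X}\) at \(p\) is approximated by shortest paths that either stay in one copy or cross the seam transversally; neither is obvious, and the first is a genuine theorem about Alexandrov spaces with boundary. Second, and more seriously, even granting that \(T_p\widetilde{X}\) has curvature \(\geq 0\), this is only first-order information at the single point \(p\); it does not yield the comparison inequality for triangles of positive size whose sides cross \(\partial X\), which is what ``curvature \(\geq k\) locally'' requires. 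You acknowledge this yourself in the final paragraph, which means the proposal is an honest outline of the known strategy with the central step --- controlling geodesics and comparison angles across the seam, where Perelman's concavity techniques (or Petrunin's gluing argument) do the actual work --- left open. As a proof the proposal is therefore incomplete; as a sketch of why the result is plausible and where the difficulty lies, it is accurate.
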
 

Note that Perelman's proof of the doubling theorem was never formally published.\footnote{Anton Petrunin kindly keeps a copy of Perelman's preprint available on his website: \texttt{https://anton-petrunin.github.io/papers/}} 
However, it is a direct consequence of the gluing theorem \cite{petrunin}, which generalizes the doubling theorem to any two Alexandrov spaces with isometric boundaries.


\section{Equidistant Sets} 
\label{sec: equidistant} 


\begin{defn}
Let \((X,d)\) be a metric space. Given two nonempty sets \(A,B \subseteq X\), the \emph{equidistant set} (also \emph{mediatrix} or \emph{midset}) determined by \(A\) and \(B\) is the set of points of equal distance to \(A\) and \(B\); 
\[E(A,B) = \{x\in X : d(x,A) = d(x,B) \} . \] 
In the case of singleton sets, say \(\{a\}\) and \(\{b\}\), we use \(E(a,b) = E(\{a\},\{b\})\) for simplicity of notation. 
\end{defn}

Although equidistant sets are defined for any metric space, properties of equidistant sets can vary greatly depending on the metric.

\begin{ex}
Consider the real line with the following metrics 
\[ d_1(x,y) = \frac{|x-y|}{1+|x-y|} \quad \text{and} \quad d_2(x,y) = \begin{cases} |x-y| , & \text{ if } |x-y| < 1 \\ 1 , & \text{ otherwise.} \end{cases} \]
The metrics \(d_1\) and \(d_2\) are strongly equivalent with \(d_1(x,y) \leq d_2(x,y) \leq 2d_1(x,y)\). 
It is easy to verify that for any \(p,q\in (\R,d_1)\), we have \(E(p,q) = \{\frac{p+q}{2}\}\). However, in \((\R,d_2)\), if \(p=2\) and \(q=-2\), 
\[ E(p,q) = (-\infty,-3] \cup [ -1,1] \cup [3,\infty) . \] 
\end{ex}

It should be further observed that one can easily construct a metric space for which the equidistant set determined by two points is actually empty. 
However, this is easily avoided by assuming the space is path connected (which length spaces always are).

\begin{lem} 
Let \(X\) be a path connected metric space. If \(A\) and \(B\) are nonempty subsets of \(X\), then \(E(A,B)\) is nonempty. 
\end{lem}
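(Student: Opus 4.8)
The plan is to reduce the existence of an equidistant point to a single application of the intermediate value theorem along a path. First I would work with the two distance functions \(x \mapsto d(x,A)\) and \(x \mapsto d(x,B)\) and recall that each is \(1\)-Lipschitz, hence continuous: for any \(x,y\in X\) and any \(a\in A\), the triangle inequality gives \(d(x,A) \le d(x,y) + d(y,a)\), and taking the infimum over \(a\in A\) yields \(d(x,A) \le d(x,y) + d(y,A)\); by symmetry \(|d(x,A) - d(y,A)| \le d(x,y)\). The same estimate holds for \(B\).

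Next I would define \(f\colon X \to \R\) by \(f(x) = d(x,A) - d(x,B)\), which is continuous as a difference of continuous functions. Since \(E(A,B) = f^{-1}(0)\), it suffices to exhibit a single zero of \(f\). As \(A\) and \(B\) are nonempty, I may choose points \(a\in A\) and \(b\in B\); because \(d(a,A) = 0 = d(b,B)\), we get \(f(a) = -d(a,B) \le 0\) and \(f(b) = d(b,A) \ge 0\). Thus \(f\) changes sign between \(a\) and \(b\).

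Finally, I would invoke path connectedness: there is a continuous path \(\gamma\colon [0,1] \to X\) with \(\gamma(0) = a\) and \(\gamma(1) = b\). The composite \(f\circ\gamma\) is a continuous real-valued function on \([0,1]\) satisfying \((f\circ\gamma)(0) \le 0 \le (f\circ\gamma)(1)\), so the intermediate value theorem provides some \(t\in[0,1]\) with \(f(\gamma(t)) = 0\). That point lies in \(E(A,B)\), proving the set is nonempty.

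I do not anticipate a genuine obstacle: this is a standard sign-change argument, and the only step requiring any care is the continuity of the distance-to-a-set functions, which the Lipschitz estimate above settles directly. It is worth emphasizing that path connectedness is precisely what licenses the intermediate value step, so the hypothesis is used exactly once and cannot be dropped.
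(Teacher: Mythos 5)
Your proof is correct and follows essentially the same route as the paper's: define \(f(x) = d(x,A) - d(x,B)\), connect a point of \(A\) to a point of \(B\) by a path, and apply the intermediate value theorem. The only difference is that you spell out the Lipschitz continuity of the distance functions and the sign check at the endpoints, which the paper leaves implicit.
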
 

\begin{proof} 
Define the function \(f_{AB}: X \to \R\) by 
\[ f_{AB}(x) = d(x,A) - d(x,B) \] 
and notice that \(E(A,B) = f^{-1}_{AB}( \{0\})\). 
Fixing some \(a\in A\) and \(b\in B\) and letting \(\sigma: [0,1]\to X\) be a path from \(a\) to \(b\), the intermediate value theorem tells us that \(f_{AB}(\sigma(t)) = 0\) for some \(t\in [0,1]\). 
\end{proof}

In order to ensure that our equidistant sets are `well behaved' in some sense, we will follow \cite{bv2006} and assume the ambient metric space is always a proper length space with no branching geodesics. In particular, this includes Alexandrov spaces.

\begin{defn} 
Given a point \(x\) and a set \(A\), the \emph{metric projection} \(P_A(x)\) is the set of points of \(A\) which realize the distance \(d(x,A)\), 
\[ P_A(x) = \{a\in A : d(x,a) = d(x,A)\} . \] 
Whenever we say that \(\gamma : [0,T] \to X\) is a shortest path from a point \(x\) to a set \(A\), we mean that \(\gamma(0) = x\) and \(\gamma(T) \in P_A(x)\), assuming \(P_A(x)\) is nonempty. 
\end{defn}

The following proposition, and its corollaries, extend properties of equidistant sets determined by distinct points found in \cite[\S 2]{bv2006} to equidistant sets determined by disjoint closed sets.

\begin{prop}\label{prop: ES basics} 
Let \(X\) be a proper length space with no branching geodesics, \(A\) and \(B\) be nonempty disjoint closed subsets of \(X\), and \(x\in E(A,B)\) be given. \\ 
(i) If \(\gamma : [0,T] \to X\) is a shortest path from \(x\) to \(A\), then \(d(\gamma(t), A) < d(\gamma(t),B)\) for all \(t\in (0,T]\). \\ 
(ii) There is a path connecting some \(a\in A\) to some \(b\in B\) which does not intersect \(E(A,B)\setminus\{x\}\). 
\end{prop}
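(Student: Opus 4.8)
The plan is to prove (i) first and then deduce (ii) from it almost immediately. Fix a shortest path \(\gamma:[0,T]\to X\) from \(x\) to \(A\), so that \(\gamma(0)=x\), \(\gamma(T)\in P_A(x)\), and \(d(x,A)=T\). The first step is to pin down \(d(\gamma(t),A)\): since \(\gamma\) restricts to a shortest path from \(\gamma(t)\) to \(\gamma(T)\in A\) we have \(d(\gamma(t),A)\le T-t\), and if some \(a'\in A\) satisfied \(d(\gamma(t),a')<T-t\) then \(d(x,a')\le t+d(\gamma(t),a')<T=d(x,A)\), a contradiction; hence \(d(\gamma(t),A)=T-t\). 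As \(x\in E(A,B)\) gives \(d(x,B)=T\), the triangle inequality yields \(d(\gamma(t),B)\ge T-t=d(\gamma(t),A)\), so it remains only to rule out equality. The endpoint case \(t=T\) is immediate, since \(\gamma(T)\in A\) and \(A\), \(B\) are disjoint and closed force \(d(\gamma(T),B)>0=d(\gamma(T),A)\). Thus I may assume \(t\in(0,T)\) and, toward a contradiction, that \(d(\gamma(t),B)=T-t\).

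Under this assumption I will manufacture a pair of branching shortest paths. Because \(X\) is proper and \(B\) is closed, there is a point \(b\in P_B(\gamma(t))\) and, by Hopf--Rinow (Theorem \ref{hopf-rinow}), a shortest path \(\beta\) from \(\gamma(t)\) to \(b\) of length \(T-t\). The concatenation \(\sigma=\gamma|_{[0,t]}*\beta\) then has length \(T\) and joins \(x\) to \(b\); since \(d(x,b)\ge d(x,B)=T\), it is a shortest path from \(x\) to \(b\). Now \(\gamma\) and \(\sigma\) are two shortest paths issuing from \(x\) that coincide on the nondegenerate initial segment \([0,t]\) yet terminate at the distinct points \(\gamma(T)\in A\) and \(b\in B\) (distinct because \(A\cap B=\emptyset\)). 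The key elementary observation is that two shortest paths emanating from \(x\) can only meet at equal parameter values: if \(\gamma(u)=\sigma(u')\) then \(u=d(x,\gamma(u))=d(x,\sigma(u'))=u'\). Letting \(s_1\) be the largest parameter with \(\gamma|_{[0,s_1]}=\sigma|_{[0,s_1]}\), so that \(0<t\le s_1<T\), the two paths agree up to \(\gamma(s_1)\) and split thereafter; exhibiting a genuine branch point there contradicts the no-branching hypothesis (Definition \ref{defn: branch}) and completes (i).

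The step I expect to be the main obstacle is this last one: passing from ``\(\gamma\) and \(\sigma\) cease to agree after \(s_1\)'' to a bona fide branch point in the sense of Definition \ref{defn: branch}, which requires producing an \(\varepsilon>0\) with \(\gamma([s_1,s_1+\varepsilon])\cap\sigma([s_1,s_1+\varepsilon])=\{\gamma(s_1)\}\). By the equal-parameter observation, any intersection point off the diagonal is excluded automatically, so the only thing to rule out is a sequence of parameters \(u_n\downarrow s_1\) with \(\gamma(u_n)=\sigma(u_n)\) --- that is, distinct shortest paths from \(\gamma(s_1)\) to points accumulating at it. I anticipate handling this by a minimal-separation argument, selecting the first re-meeting after \(s_1\) (equivalently, the last overlap) so that the two branches separate cleanly at a single point, which is precisely the configuration forbidden by the absence of branching geodesics.

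Finally, (ii) follows from (i) with little extra work. Choose shortest paths \(\gamma_A\) from \(x\) to \(A\) and \(\gamma_B\) from \(x\) to \(B\), ending at \(a\in A\) and \(b\in B\); these exist because \(X\) is proper and \(A\), \(B\) are closed. By (i), every point of \(\gamma_A\) other than \(x\) satisfies \(d(\cdot,A)<d(\cdot,B)\) and so lies outside \(E(A,B)\), and symmetrically for \(\gamma_B\); in particular any common point of \(\gamma_A\) and \(\gamma_B\) besides \(x\) would be simultaneously strictly closer to \(A\) and strictly closer to \(B\), which is impossible. Hence \(\gamma_A\) and \(\gamma_B\) meet only at \(x\), so reversing \(\gamma_A\) and concatenating with \(\gamma_B\) produces a simple arc from \(a\) to \(b\); and since each of \(\gamma_A\), \(\gamma_B\) meets \(E(A,B)\) only at \(x\), this arc meets \(E(A,B)\) exactly at \(x\) and therefore avoids \(E(A,B)\setminus\{x\}\), as required.
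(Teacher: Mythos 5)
Your argument is correct and is essentially the paper's own proof: assume \(d(\gamma(t_0),B)\le d(\gamma(t_0),A)\) at an interior parameter, concatenate \(\gamma|_{[0,t_0]}\) with a shortest path to \(P_B(\gamma(t_0))\) to obtain a second shortest path from \(x\) that shares a nondegenerate initial segment with \(\gamma\) but ends at a different point, and contradict the no-branching hypothesis; part (ii) is the same concatenation of the two shortest paths from \(x\). The one step you flag as delicate --- extracting an honest branch point in the sense of Definition \ref{defn: branch} when the two paths might re-intersect along a sequence accumulating at the splitting parameter --- is genuine but is passed over silently in the paper as well, and your proposed fix (isolating a clean ``bigon'' after the maximal common initial segment and branching at its left endpoint) does close it.
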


\begin{proof} 
For (i): Let \(x\in E(A,B)\) and \(\gamma\) with \(\gamma(T) = a \in P_A(x)\) be given. 
For the sake of contradiction, suppose that there is a \(t_0\in (0,T)\) such that \(d(\gamma(t_0), B) \leq d(\gamma(t_0),A)\). Fix \(b_0\in P_B(\gamma(t_0))\). Since \(d(x,a) \leq d(x,b_0)\) and \(d(\gamma(t_0),b_0) \leq d(\gamma(t_0),a)\), 
\begin{align*} 
d(x,\gamma(t_0)) + d(\gamma(t_0),b_0) & = d(x,a) - d(\gamma(t_0), a) + d(\gamma(t_0),b_0)
\\ & \leq d(x,b_0) - d(\gamma(t_0), a) + d(\gamma(t_0),a) 
\\ & = d(x,b_0) 
\end{align*} 
so \(d(x,b_0) = d(x,\gamma(t_0)) + d(\gamma(t_0),b_0)\). Given that \(a \neq b_0\), this implies that \(\gamma\) is branching, which contradicts our hypothesis. Thus, \(d(\gamma(t),A) < d(\gamma(t),B)\) for all \(t\in(0,T]\). 

For (ii): Given any \(x\in E(A,B)\), if \(\gamma\) if a shortest path from \(a\in P_A(x)\) to \(x\), and \(\eta\) is a shortest path from \(x\) to \(b\in P_B(x)\), then by part (i), the concatenation of \(\gamma\) and \(\eta\) connects \(a\) to \(b\) without intersecting \(E(A,B)\setminus \{x\}\). 
\end{proof}

\begin{cor}\label{cor: no interior}
If \(X\) is a proper length space with no branching geodesics, then for any nonempty disjoint closed sets \(A,B\subseteq X\), the equidistant set \(E(A,B)\) has empty interior. 
\end{cor}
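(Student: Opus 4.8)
The plan is to argue by contradiction, leaning on Proposition \ref{prop: ES basics}(i), which already carries the essential geometric content. Suppose \(E(A,B)\) has nonempty interior and fix an interior point \(x\), so that there is a radius \(r>0\) with the open ball of radius \(r\) about \(x\) contained in \(E(A,B)\). First I would check that \(x\notin A\cup B\): since \(A\) and \(B\) are disjoint and closed, if \(d(x,A)=0\) then \(x\in A\), and because \(x\in E(A,B)\) we would also have \(d(x,B)=0\), forcing \(x\in B\) and contradicting disjointness. Hence \(d(x,A)=d(x,B)>0\).

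Next I would produce a nondegenerate shortest path from \(x\) to \(A\). Since \(X\) is proper, the intersection of \(A\) with a suitable closed ball about \(x\) is compact, so a distance-minimizing sequence in \(A\) has a convergent subsequence and the projection \(P_A(x)\) is nonempty; properness also makes \(X\) a geodesic space via the Hopf--Rinow theorem (Theorem \ref{hopf-rinow}), so there is a shortest path \(\gamma:[0,T]\to X\) with \(\gamma(0)=x\), \(\gamma(T)\in P_A(x)\), and \(T=d(x,A)>0\). Then I would derive the contradiction directly: for any \(t\in(0,T]\) small enough that \(t<r\), we have \(d(x,\gamma(t))=t<r\), so \(\gamma(t)\) lies in the ball and hence in \(E(A,B)\); but Proposition \ref{prop: ES basics}(i) gives \(d(\gamma(t),A)<d(\gamma(t),B)\), so \(\gamma(t)\notin E(A,B)\). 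This contradiction forces \(E(A,B)\) to have empty interior.

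The main obstacle, such as it is, lies not in the contradiction but in justifying the existence of the nondegenerate shortest path \(\gamma\): one must confirm that \(x\notin A\) so that \(T>0\), and invoke properness both to guarantee \(P_A(x)\neq\emptyset\) and to realize it by an actual shortest path. An alternative route would bypass part (i) and instead use part (ii) of the proposition: the path from some \(a\in A\) to some \(b\in B\) meeting \(E(A,B)\) only at \(x\) must enter the ball of radius \(r\) about \(x\) near its passage through \(x\), and that ball lies entirely in \(E(A,B)\), contradicting that the path meets \(E(A,B)\) in the single point \(x\).
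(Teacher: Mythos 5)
Your argument is correct and is essentially the paper's own proof, which likewise observes that by Proposition \ref{prop: ES basics}(i) any shortest path from \(x\in E(A,B)\) to \(A\) immediately leaves \(E(A,B)\), so no ball about \(x\) can lie in \(E(A,B)\). You simply supply more detail (checking \(x\notin A\cup B\) and using properness to realize the shortest path), which the paper leaves implicit.
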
 

\begin{proof} 
Let \(x\in E(A,B)\) be given. By Proposition \ref{prop: ES basics}, an open ball of any radius centered at \(x\) cannot be contained in \(E(A,B)\) since any shortest path from \(x\) to \(A\) (or \(B\)) immediately leaves \(E(A,B)\).  
\end{proof}

Our next corollary is related to idea of \emph{minimal separating} introduced in the study of Brillouin spaces and equidistant sets determined by points \cite{vprs}. 
In our case, \(E(A,B)\) separates \(X\) into two sets 
\[ \{x\in X : d(x,A) < d(x,B) \} \quand \{x\in X : d(x,B) < d(x,A)\} . \] 
but each of these sets may consist of more than one component of \(X\setminus E(A,B)\). 
However, if we additionally assume that \(A\) and \(B\) are connected, then this separation is minimal.

\begin{defn} 
Let \(X\) be a connected metric space. A set \(E\subseteq X\) is \emph{separating} if \(X\setminus E\) consists of more than one component. If \(E\) is separating, but no proper subset of \(E\) is separating, then \(E\) is \emph{minimal separating}. 
\end{defn}

\begin{cor}\label{cor: minimal sep}
Let \(X\) be a proper length space with no branching geodesics. If \(A,B\subseteq X\) are nonempty disjoint closed and connected, then \(E(A,B)\) is minimal separating. 
\end{cor}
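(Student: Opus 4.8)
The plan is to first verify that \(E(A,B)\) is separating, and then to obtain minimality by showing that the complement of \emph{any} proper subset of \(E(A,B)\) remains connected, the key tool being the connecting-path statement of Proposition \ref{prop: ES basics}(ii). To begin I would record the two open sides
\[ U_A = \{x : d(x,A) < d(x,B)\}, \qquad U_B = \{x : d(x,B) < d(x,A)\}, \]
which are open because \(x \mapsto d(x,A) - d(x,B)\) is continuous, are disjoint, and satisfy \(X \setminus E(A,B) = U_A \cup U_B\). Since \(A\) and \(B\) are disjoint closed sets, every \(a\in A\) has \(d(a,B)>0\), so \(A\subseteq U_A\) and likewise \(B\subseteq U_B\); hence both sides are nonempty and \(E(A,B)\) is separating.

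For minimality I would argue by contradiction: suppose some proper subset \(E'\subsetneq E(A,B)\) is separating, giving a partition \(X\setminus E' = W_1\sqcup W_2\) into nonempty, relatively clopen pieces. The first step is to force \(A\) and \(B\) into the same piece. As \(A\) is connected and \(A\subseteq X\setminus E'\), we have \(A\subseteq W_1\) (say). Choosing \(x_0\in E(A,B)\setminus E'\) (possible precisely because \(E'\) is proper) and applying Proposition \ref{prop: ES basics}(ii) at \(x_0\) produces a path from \(A\) to \(B\) meeting \(E(A,B)\) only at \(x_0\), hence avoiding \(E'\) and lying in \(X\setminus E'\); this path drags \(B\) into \(W_1\) too, so \(A\cup B\subseteq W_1\) and \(W_2\) is disjoint from \(A\cup B\).

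Then I would take any \(y\in W_2\) and derive a contradiction in two cases. If \(y\notin E(A,B)\), say \(y\in U_A\), a shortest path from \(y\) to \(A\) stays in \(U_A\): a short triangle-inequality computation gives \(d(z,A)-d(z,B)\le d(y,A)-d(y,B)<0\) at every point \(z\) along it, so the path lies in \(X\setminus E'\) and connects \(y\in W_2\) to \(A\subseteq W_1\), which is impossible. If instead \(y\in E(A,B)\setminus E'\), then Proposition \ref{prop: ES basics}(ii) at \(y\) yields a path from \(A\) to \(B\) through \(y\) that avoids \(E(A,B)\setminus\{y\}\), a set containing \(E'\); this path lies in \(X\setminus E'\) and connects \(y\) to \(W_1\), again impossible. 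Thus \(W_2\) is empty, contradicting that \(E'\) separates, so no proper subset separates and \(E(A,B)\) is minimal separating.

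I expect the main subtlety to be the minimality direction, in two respects. First, one must realize that the correct target is that \emph{every} proper subset (not merely every one-point deletion) fails to separate, which is what forces the argument to begin from an arbitrary separation \(W_1\sqcup W_2\). Second, the connectedness hypotheses on \(A\) and \(B\) enter exactly at the step that herds all of \(A\cup B\) into a single component \(W_1\); without them, \(U_A\) or \(U_B\) could distribute across distinct components and the reconnection argument would break down, matching the paper's remark that the sides may otherwise be disconnected. Properness of \(X\) is used only to guarantee that the shortest paths to \(A\) invoked in the final case exist and attain the metric projection.
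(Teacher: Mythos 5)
Your proof is correct and takes essentially the same route as the paper's: the decomposition \(X\setminus E(A,B) = X_A \cup X_B\), shortest paths to \(A\) (resp.\ \(B\)) to show each side is a single component, and Proposition \ref{prop: ES basics}(ii) to reconnect the two sides through any point removed from \(E(A,B)\). If anything, you are slightly more explicit than the paper in handling an arbitrary proper subset \(E'\) rather than a single-point deletion, by also routing each remaining point of \(E(A,B)\setminus E'\) into the component containing \(A\cup B\).
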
 

\begin{proof} 
For simplicity of notation, define the sets 
\[ X_A = \{x\in X : d(x,A) < d(x,B) \} \quand X_B = \{x\in X : d(x,B) < d(x,A)\} \] 
so that \(X\) is the union of the disjoint sets \(X_A\), \(X_B\), and \(E(A,B)\). 

First, we will show that \(X_A\) is in fact a single component of \(X\setminus E(A,B)\). 
Certainly \(A\subseteq X_A\), and given that \(A\) is connected, \(A\) must lie in one component of \(X_A\). Letting \(x_0\) be any element of \(X_A\) and \(\gamma: [0,T]\to X\) be a shortest path from \(x_0\) to \(A\), the same reasoning as Proposition \ref{prop: ES basics}(i) shows that the image of \(\gamma\) is contained in \(X_A\). In particular, \(x_0\) is in the same component as \(A\). Therefore, \(X_A\) (and subsequently, \(X_B\)) is a single component. 

Applying Proposition \ref{prop: ES basics}(ii), we see that removing any point from \(E(A,B)\) allows for a path from \(X_A\) to \(X_B\). Thus, \(E(A,B)\) is minimal separating. 
\end{proof}


\subsection{Directions to the Equidistant Set}

Given an Alexandrov space \(X\), a compact set \(A\subseteq X\), and a point \(x\in X\setminus A\), we define \(\Theta_A\) to be the set of directions of shortest paths from \(x\) to \(A\): 
\[ \Theta_A = \{ [\gamma] \in S_x X : \gamma \text{ is a shortest path from } x \text{ to } A \} . \]

\begin{lem}\label{lem: disjoint directions} 
Let \(X\) be an Alexandrov space and let \(A\) and \(B\) be disjoint compact subsets of \(X\). For any \(x\in E(A,B)\), the sets \(\Theta_A\) and \(\Theta_B\) are disjoint compact subsets of \(S_xX\). 
\end{lem}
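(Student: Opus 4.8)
The plan is to handle disjointness and compactness separately: disjointness falls out quickly from the absence of branching geodesics, whereas compactness requires a limiting argument whose delicate point is the continuity of directions along converging shortest paths.

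For disjointness, suppose toward a contradiction that some \(v \in \Theta_A \cap \Theta_B\). Then \(v = [\gamma] = [\eta]\) for a shortest path \(\gamma\) from \(x\) to \(A\) and a shortest path \(\eta\) from \(x\) to \(B\), and \([\gamma] = [\eta]\) means precisely that \(\angle(\gamma,\eta) = 0\). By Corollary \ref{cor: subpath}, one of \(\gamma,\eta\) is a subpath of the other. But since \(x \in E(A,B)\) we have \(L(\gamma) = d(x,A) = d(x,B) = L(\eta)\), so the two paths have equal length and, sharing an initial segment from \(x\), must coincide. Their common endpoint then lies in \(A \cap B\), contradicting the disjointness of \(A\) and \(B\). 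Hence \(\Theta_A \cap \Theta_B = \emptyset\).

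For compactness it suffices to prove that \(\Theta_A\) (and, symmetrically, \(\Theta_B\)) is closed, since the space of directions \(S_xX\) is compact. Let \(v\) be a limit point of \(\Theta_A\) and choose shortest paths \(\gamma_n\) from \(x\) to \(A\) with \([\gamma_n] \to v\). Each \(\gamma_n\) has length \(d(x,A)\); after linearly reparametrizing to \([0,1]\), Corollary \ref{cor: arzela} furnishes a subsequence converging uniformly to a linearly parametrized shortest path \(\gamma\). Since the endpoints \(\gamma_n(1) \in A\) converge and \(A\) is closed, the endpoint of \(\gamma\) lies in \(A\), and because \(d(x,\gamma(1)) = d(x,A)\) this \(\gamma\) is again a shortest path from \(x\) to \(A\); thus \([\gamma] \in \Theta_A\). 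It remains only to identify \(v\) with \([\gamma]\), for then \(v \in \Theta_A\).

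The main obstacle is exactly this last identification: one must show that uniform convergence \(\gamma_n \to \gamma\) forces convergence of directions, \(\angle(\gamma_n,\gamma) \to 0\), so that \(v = \lim [\gamma_n] = [\gamma]\). This is delicate because the angle is recovered only as a limit of comparison angles \(\angle^k\big(\gamma_n(t),\gamma(t)\big)\) as \(t \to 0^+\): for any fixed scale \(t\) this comparison angle tends to \(0\) as \(n \to \infty\), yet the defining value of \(\angle(\gamma_n,\gamma)\) is the supremum of such comparison angles (attained in the limit \(t\to 0\)), so uniform closeness of the paths does not by itself bound the angle. I would control this by working in the tangent cone \(T_xX\), where for small \(t\) the points \(\gamma_n(t)\) and \(\gamma(t)\) are approximated by cone points whose distance is governed by \(\angle(\gamma_n,\gamma)\); the absence of branching geodesics (Lemma \ref{lem: no branch}) then prevents a shortest path from leaving \(x\) at a positive angle to \(\gamma\) and refocusing onto \(\gamma\) at arbitrarily small scales, which is what would be required for the angle to stay bounded away from \(0\) while the paths converge uniformly. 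Making this convergence of directions rigorous is the heart of the matter; granting it, the closedness, and hence compactness, of \(\Theta_A\) and \(\Theta_B\) follows at once.
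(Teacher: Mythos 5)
Your proposal follows the paper's proof essentially step for step: disjointness via Corollary \ref{cor: subpath} combined with the equal lengths \(L(\gamma)=d(x,A)=d(x,B)=L(\eta)\), and compactness via Corollary \ref{cor: arzela} applied to representatives of length \(d(x,A)\) (the paper argues sequential compactness directly rather than closedness in the compact space \(S_xX\), but this is cosmetic). The step you explicitly decline to complete---that uniform convergence \(\gamma_n\to\gamma\) of shortest paths issuing from \(x\) forces \(\angle(\gamma_n,\gamma)\to 0\), so that the limit direction is \([\gamma]\)---is precisely the step the paper also takes for granted: its proof concludes with ``therefore \(\{[\gamma_n]\}_n\) contains a subsequence which converges to \([\gamma]\)'' with no further argument. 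So relative to the paper you have lost nothing, and your instinct that this is the one delicate point is sound: since the lower curvature bound gives \(\angle(\gamma_n,\gamma)\geq\angle^k\big(\gamma_n(t),\gamma(t)\big)\) for every scale \(t\) (comparison angles are dominated by the angle), uniform closeness of the paths yields only a small \emph{lower} bound on the angle, not the needed upper bound; your heuristic via the tangent cone and non-branching gestures at the right mechanism but is not yet a proof. This is the standard fact that the set of directions of shortest paths from \(x\) to a fixed compact set is closed in \(S_xX\); to make either proof self-contained one would have to establish it, for instance via the first-variation formula of Theorem \ref{thm: one sided deriv} or by citing the corresponding statement from the Alexandrov-geometry literature.
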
 

\begin{proof} 
First, we verify that \(\Theta_A\) and \(\Theta_B\) are disjoint. 
By way of contradiction, suppose that there are paths \(\gamma_a\) and \(\gamma_b\) from \(x\) to \(a\in P_A(x)\) and \(b\in P_B(x)\), respectively, such that \(\angle(\gamma_a , \gamma_b) = 0\). 
By Corollary \ref{cor: subpath}, one of these paths is a subpath of the other. But since they have the same length, we must have \(\gamma_a = \gamma_b\), which is a contradiction since \(A\) and \(B\) are disjoint. 

To show that \(\Theta_A\) (and subsequently, \(\Theta_B\)) is compact, let \(\{[\gamma_n]\}_n\) be a sequence in \(\Theta_A\). For each \(n\) let \(\gamma_n: [0,d(x,A)] \to X\) be a shortest path in the equivalence class \([\gamma_n]\). Since \(L(\gamma_n) = d(x,A)\) for each \(n\), by Corollary \ref{cor: arzela} and the compactness of \(A\), \(\{\gamma_n\}_n\) contains a subsequence which converges uniformly to a shortest path \(\gamma: [0,d(x,A)] \to X\) which connects \(x\) to \(A\). Therefore, \(\{[\gamma_n]\}_n\) contains a subsequence which converges to \([\gamma]\in \Theta_A\). Thus, \(\Theta_A\) is compact. 
\end{proof}


\subsection{Connections to the Cut Locus}

We close this section with a quick detour to establish some connections between equidistant sets and the cut locus. 
See \cite[\S \S 1,2]{ST1996} for further discussion on the cut locus for Alexandrov surfaces. 

\begin{defn} 
Let \(X\) be an Alexandrov space and \(K\subseteq X\) be compact. A point \(x\in X\) is a \emph{cut point} to \(K\) if there is a shortest path \(\gamma\) from \(x\) to \(K\) such that \(\gamma\) is not properly contained in any other shortest path to \(K\). The \emph{cut locus} to \(K\), denoted \(C(K)\), is the set of all cut points to \(K\). 
\end{defn} 

Let \(A\) and \(B\) are disjoint nonempty compact subsets of an Alexandrov space. Since there are no branching geodesics, and each \(x\in E(A,B)\) admits at least two shortest paths to \(A\cup B\), we immediately see that 
\[ E(A,B) \subseteq C(A\cup B) \] 
(this was similarly noted in \cite[p.~378]{zamfirescu}). 
This observation is beneficial as it allows us to apply much of the preliminary work done in \cite{ST1996} -- which was originally developed for the cut locus -- to equidistant sets (in particular, see Lemmas \ref{lem: basic lemma} and \ref{lem: rectifiable}).



\section{Equidistant Sets on Alexandrov Surfaces} 
\label{sec: ES CBB}


We will now exclusively assume that \(X\) is a compact \(2\)-dimensional Alexandrov space, and \(A\) and \(B\) are nonempty disjoint closed subsets of \(X\). We additionally assume that \(X\) is without boundary, although the doubling theorem will allow us to extend Theorem \ref{thm: simplicial} to spaces with boundary.

The discussion in this section expands upon \cite[\S 4]{bv2006}, \cite[\S 1]{bv2007}, and \cite[\S 3]{herreros}, as we consider Alexandrov spaces instead of Riemannian manifolds, and disjoint nonempty compact sets instead of distinct points.


\subsection{Sectors and Wedges}

The following definition gives a construction of `wedge-shaped' neighborhoods which we will use to describe the local behavior of \(E(A,B)\). We assume the radius \(\rho\) always satisfies \(\rho < \inf \{d(a,b) : a\in A , b\in B\}\), as well as being sufficiently small so that the open ball \(B_\rho(x)\) is homeomorphic to a disk, and the boundary 
\[ \partial B_\rho(x) = \{ y \in X : d(x,y) = \rho \} \] 
is homeomorphic to a circle. Given that \(A\) and \(B\) are disjoint and compact, and \(X\) is a \(2\)-manifold (Proposition \ref{prop: dimension}(iii)), such a \(\rho\) always exists.

\begin{defn}
Let \(X\) be an Alexandrov surface and \(K\subseteq X\) be nonempty and compact. A \emph{sector} of radius \(\rho\) at \(x \in C(K)\) is a component of 
\[ B_\rho(x) \setminus \{ \gamma(t) : [\gamma] \in \Theta_K , t\in [0,\rho) \} . \] 
Now let \(A\) and \(B\) be disjoint nonempty compact sets. We define a \emph{wedge} of radius \(\rho\) at \(x\in E(A,B) \subseteq C(A\cup B)\), which we will denote \(W_\rho(x)\), to be any sector at \(x\) such that one side is bounded by a shortest path to \(A\), and the other is bounded by a shortest path to \(B\). 
\end{defn}


See Figure \ref{fig: wedges} for clarification of sectors and wedges. Note that wedges and sectors are open sets. Furthermore, every point of the equidistant set admits at least two wedges (and always an even number), but only finitely many (see Lemma \ref{lem: finite wedges}).

\begin{figure}[h] 
\begin{center}
\includegraphics[width=0.35\linewidth]{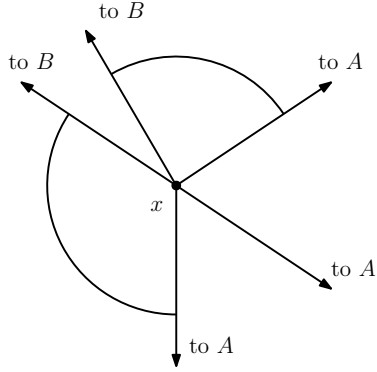}
\caption{A point \(x\in E(A,B)\) which admits five sectors, but only two wedges.} 
\label{fig: wedges} 
\end{center}
\end{figure}

\begin{lem}\label{lem: finite wedges} 
Let \(X\) be a compact Alexandrov surface and let \(A,B\subseteq X\) be disjoint closed sets. For any \(x\in E(A,B)\), there are only finitely many wedges at \(x\). 
\end{lem}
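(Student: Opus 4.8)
The plan is to turn the count of wedges into a packing estimate in the space of directions \(S_xX\). Since \(X\) has empty boundary, \(S_xX\) is a circle; it is a compact one-dimensional space of curvature \(\geq 1\), and the radius sphere theorem \cite{GP1993} gives \(\diam S_xX \leq \pi\), so its total length is at most \(2\pi\). By Lemma \ref{lem: disjoint directions} the sets \(\Theta_A\) and \(\Theta_B\) are disjoint and compact in \(S_xX\), so the distance \(\angle(\cdot,\cdot)\) attains a positive minimum \(\delta > 0\) on the compact set \(\Theta_A \times \Theta_B\).

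Next I would set up the correspondence between wedges and complementary arcs of \(\Theta_{A\cup B} = \Theta_A \cup \Theta_B\) in \(S_xX\). (Here \(\Theta_{A\cup B} = \Theta_A \cup \Theta_B\) because \(x\in E(A,B)\), so the shortest paths from \(x\) to \(A\cup B\) are exactly those reaching \(A\) together with those reaching \(B\).) For the small radius \(\rho\) fixed in the definition of sectors, each sector \(S\) determines the open arc \(I_S \subseteq S_xX\) consisting of the directions of shortest paths from \(x\) into \(S\). Every spoke whose direction lies in \(\Theta_{A\cup B}\) has been removed in forming the sectors, so \(I_S\) meets \(\Theta_{A\cup B}\) only in its two endpoints; that is, \(I_S\) is a component of \(S_xX \setminus \Theta_{A\cup B}\), and its endpoints lie in \(\Theta_{A\cup B}\). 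Distinct sectors are disjoint, and a radial segment in a fixed direction not in \(\Theta_{A\cup B}\) is connected and hence contained in a single sector, so distinct sectors determine disjoint arcs.

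With this in hand the count is immediate. A sector is a wedge exactly when one endpoint of \(I_S\) lies in \(\Theta_A\) and the other in \(\Theta_B\). For such a wedge, \(I_S\) is an arc joining some \(\alpha \in \Theta_A\) to some \(\beta \in \Theta_B\), so its length is at least \(\angle(\alpha,\beta) \geq \delta\). Thus the wedges at \(x\) determine pairwise disjoint open arcs of \(S_xX\), each of length at least \(\delta\). Since the total length of \(S_xX\) is at most \(2\pi\), there are at most \(2\pi / \delta\) wedges, which is finite.

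I expect the main obstacle to be making the correspondence in the second paragraph fully rigorous from the local structure of \(X\) near \(x\): because shortest paths from \(x\) to nearby points need not be unique, one must argue carefully that, for \(\rho\) small, the direction assignment is well enough behaved that each sector really does sweep out a single complementary arc whose endpoints are its two bounding directions. The length comparison and the disjointness are then routine. Finally, the same argument with \(\pi\) in place of \(2\pi\) covers the degenerate case in which \(S_xX\) is a segment rather than a circle.
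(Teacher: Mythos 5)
Your argument is correct, and it reaches the conclusion by a genuinely different mechanism than the paper's. The paper argues by contradiction: infinitely many wedges would force alternating sequences \(\{\theta_{A,n}\}\subseteq\Theta_A\) and \(\{\theta_{B,n}\}\subseteq\Theta_B\) of bounding directions, and compactness of \(\Theta_A\) (Lemma \ref{lem: disjoint directions}) then produces a single limit direction lying in both \(\Theta_A\) and \(\Theta_B\), contradicting their disjointness. You instead make the separation quantitative -- extracting the positive minimum \(\delta\) of \(\angle(\cdot,\cdot)\) on \(\Theta_A\times\Theta_B\) -- and convert the wedge count into a packing bound \(2\pi/\delta\) on the circle \(S_xX\). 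The two proofs rest on exactly the same inputs (Lemma \ref{lem: disjoint directions} together with the fact that \(S_xX\) is a circle of length at most \(2\pi\) whose cyclic order of directions matches the arrangement of sectors around \(x\)), and in fact the paper's step ``the subsequence \(\{\theta_{B,n_k}\}_k\) of directions between \(\theta_{A,n_k}\) and \(\theta_{A,n_k+1}\) also converges to \(\bar\theta\)'' is really your packing estimate in disguise: it is the finiteness of the total angle that forces consecutive alternating directions to come arbitrarily close. What your version buys is an explicit bound on the number of wedges in terms of the angular separation of \(\Theta_A\) and \(\Theta_B\); what it costs is that you must carry the sector--arc correspondence explicitly, whereas the paper only needs the weaker qualitative statement that the bounding directions can be listed in alternating cyclic order. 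The technical point you flag -- that each sector sweeps out a single complementary arc with the bounding directions as endpoints, and that distinct sectors give arcs with disjoint interiors -- is indeed the one place requiring care, but the paper's proof quietly relies on the same correspondence (its ``betweenness'' of the \(\theta_{B,n}\)), so you are not assuming more than the paper does; your acknowledgment of it is appropriate rather than a gap.
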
 

\begin{proof}
By way of contradiction, suppose there are infinitely many wedges at \(x\in E(A,B)\). Then there must be infinitely many distinct directions \(\{\theta_{A,n}\}_{n=1}^\infty \subseteq \Theta_A\) and \(\{\theta_{B,n}\}_{n=1}^\infty \subseteq \Theta_B\) which form the wedges. Furthermore, we can assume that these sequences are alternating in the sense that \(\theta_{B,n}\) is always between \(\theta_{A,n}\) and \(\theta_{A,n+1}\) (i.e., if we start from \(\theta_{A,1}\) we can move along a geodesic in \(S_xX\) which passes through \(\theta_{B,1}\), then \(\theta_{A,2}\), then \(\theta_{B,2}\), and so on). Since \(\Theta_A\) is compact, there is a convergent subsequence 
\[ \theta_{A,n_k} \to \bar{\theta} \in \Theta_A . \] 
The subsequence \(\{\theta_{B,n_k}\}_k\) of directions between \(\theta_{A,n_k}\) and \(\theta_{A,n_k+1}\) also converges to \(\bar{\theta}\). Thus, \(\bar{\theta}\) is the direction of a shortest path to both \(A\) and \(B\), which contradicts Lemma \ref{lem: disjoint directions}. 
\end{proof}

Given that \(E(A,B) \subseteq C(A\cup B)\), and a wedge at \(x\in E(A,B)\) is simply a special case of a sector at \(x\in C(A\cup B)\), one can use the exact same proofs found in \cite{ST1996} for the following lemmas to arrive at Corollary \ref{cor: arc}.

\begin{lem}[{\cite[Basic Lemma]{ST1996}}]\label{lem: basic lemma} 
Let \(R_\rho(x)\) be a sector at \(x\in C(K)\). Then there exists a point \(y \in C(K) \cap R_\rho(x)\) and a Jordan arc \(\sigma: [0,1]\to C(K) \cap \overline{R_\rho(x)}\) such that \(\sigma(0) = x\) and \(\sigma(1) = y\). 
\end{lem}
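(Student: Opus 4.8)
The plan is to produce the arc by tracking, along shrinking distance circles about \(x\), the points of \(\overline{R_\rho(x)}\) that are farthest from \(K\), and to argue that these farthest points are cut points which assemble into a continuum emanating from \(x\). First I would fix notation: write \(R = R_\rho(x)\), let \(\gamma_1,\gamma_2\) be the two shortest paths from \(x\) to \(K\) bounding \(R\), and let \(\beta\) be the arc of \(\partial B_\rho(x)\) closing off \(R\). Since each \(\gamma_i\) is a shortest path to \(K\) of length \(d(x,K)\), its interior points are not cut points and satisfy \(d(\gamma_i(r),K)=d(x,K)-r\). For each \(r\in(0,\rho)\) set \(A_r=\partial B_r(x)\cap\overline{R}\), a Jordan arc whose endpoints lie on \(\gamma_1,\gamma_2\). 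The triangle inequality gives \(d(z,K)\ge d(x,K)-r\) for \(z\in A_r\), with equality only when \(z\) lies on a shortest path from \(x\) to \(K\); as interior points of \(R\) lie on no such path, \(d(\cdot,K)\) takes the value \(d(x,K)-r\) at the two endpoints of \(A_r\) and is strictly larger in between. Hence \(d(\cdot,K)\) restricted to \(A_r\) attains an interior maximum at some \(y_r\in A_r\cap R\), and clearly \(y_r\to x\) as \(r\to0\).

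Next I would show each \(y_r\) is a cut point. If \(y_r\) were a regular point --- a single shortest path to \(K\), properly extendable through \(y_r\) --- then \(d(\cdot,K)\) would behave like a smooth function near \(y_r\) whose steepest-descent direction is the direction of that path, and a maximum of its restriction to \(A_r\subseteq\partial B_r(x)\) would force this direction to be radial. Pointing outward puts \(y_r\) on a shortest path from \(x\) to \(K\), contradicting that \(y_r\) lies in the interior of the sector. The inward case would make the backward extension of \(y_r\)'s path a shortest path to \(K\) reaching \(x\) --- it cannot cross \(\gamma_1\) or \(\gamma_2\) en route without producing a cut point on their interiors, by Corollary \ref{cor: subpath} and the absence of branching --- again placing a shortest path from \(x\) into the interior of \(R\), which is impossible. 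Therefore \(y_r\in C(K)\cap R\). This regularity dichotomy is the first delicate point, since it rests on the first-variation behavior of \(d(\cdot,K)\) at regular points.

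Finally I would promote the family \(\{y_r\}\) to a Jordan arc. Any path in \(R\) from a point near \(\gamma_1\) to a point near \(\gamma_2\) must meet \(C(K)\), because the shortest-path directions to \(K\) are pinned to the two ``sides'' inherited from \(\gamma_1,\gamma_2\) and cannot interchange continuously without two shortest paths crossing, which the no-branching hypothesis forbids. Thus \(C(K)\cap\overline{R}\) contains a continuum separating the two edges of the sector, and this continuum accumulates at \(x\); fixing a small \(r_0\) and setting \(y=y_{r_0}\), I would extract from it a genuine Jordan arc \(\sigma:[0,1]\to C(K)\cap\overline{R}\) with \(\sigma(0)=x\) and \(\sigma(1)=y\). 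The main obstacle is precisely this last step: upgrading a connected piece of the cut locus to an honest arc requires knowing the cut locus is locally connected (a Peano continuum, hence arcwise connected), and establishing that local structure --- essentially the rectifiability and finite branching of the cut locus near a point --- is the technical heart of the argument and is exactly what the analysis of Shiohama and Tanaka supplies.
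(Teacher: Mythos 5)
The paper offers no proof of this lemma: it is imported verbatim from Shiohama--Tanaka, and the sentence preceding it states explicitly that ``one can use the exact same proofs found in [ST1996].'' So your proposal can only be judged on its own merits, and on those merits it has two genuine gaps. The first is in your claim that the farthest point \(y_r\) is a cut point. Your ``outward'' case produces no contradiction: if the unique segment from \(y_r\) to \(K\) leaves in the direction radially away from \(x\), the concatenation of a shortest path \(x\to y_r\) with that segment is a geodesic of length \(r+d(y_r,K)\), and since you yourself established \(d(y_r,K)>d(x,K)-r\) at interior points of the sector, this length strictly exceeds \(d(x,K)\). Hence \(y_r\) lies on a geodesic from \(x\) whose continuation minimizes to \(K\), but not on a \emph{shortest path from \(x\) to \(K\)}, which is the only thing membership in the sector excludes. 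The ``inward'' case is also incomplete: \(y_r\notin C(K)\) only guarantees that its segment to \(K\) is properly contained in \emph{some} longer segment, which may extend backward by an arbitrarily small amount and need never reach \(x\) or cross \(\gamma_1,\gamma_2\). Finally, the ``steepest-descent direction must be radial'' step is borrowed from smooth geometry: Theorem \ref{thm: one sided deriv} controls \(d(\cdot,K)\) along shortest paths emanating from \(y_r\), not along the distance circle \(A_r\), which on an Alexandrov surface need not meet radial geodesics orthogonally in any naive sense, so even the Lagrange-multiplier heuristic requires an argument.

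The second gap is the final step, which you flag but which is not a side issue --- it is the entire content of the Basic Lemma. A compact connected subset of \(C(K)\) accumulating at \(x\) need not contain any Jordan arc unless it is locally connected (a continuum such as the pseudo-arc contains no arcs at all), and establishing the local structure of the cut locus needed to extract an arc is precisely what the Shiohama--Tanaka argument accomplishes; deferring at that point to ``what the analysis of Shiohama and Tanaka supplies'' makes the proof circular. To close the argument you would need an honest metric proof that \(y_r\in C(K)\) (for instance via the first variation along geodesics from \(y_r\) to nearby points of \(A_r\), whose directions converge to directions at angle \(\pi/2\) from the direction to \(x\)), together with an independent construction of the arc, e.g.\ by ordering the cut points on the arcs \(A_r\) and controlling their dependence on \(r\); that second part is where the real work of the lemma lies.
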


\begin{lem}[{\cite[Lemma 2.3]{ST1996}}]\label{lem: rectifiable} 
Every Jordan arc \(\sigma: [0,1]\to C(K)\) constructed in Lemma \ref{lem: basic lemma} is rectifiable. 
\end{lem}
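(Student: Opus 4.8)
The plan is to prove rectifiability by bounding $\sum_i d\big(\sigma(t_i),\sigma(t_{i+1})\big)$ uniformly over all partitions $0=t_0<t_1<\cdots<t_n=1$. The two ingredients I would assemble are a Toponogov hinge comparison (to estimate each $d\big(\sigma(t_i),\sigma(t_{i+1})\big)$ from above) and the finiteness of the total curvature of the compact surface $X$ (to control the accumulated turning of the shortest paths used in that comparison). Throughout I write $r_K = d(\cdot,K)$, which is $1$-Lipschitz, and $p_i = \sigma(t_i)$.

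First I would record two structural features of the arc produced by Lemma \ref{lem: basic lemma}. Because $\sigma$ is obtained by pushing into the sector $R_\rho(x)$ away from its apex $x$, the composition $r_K\circ\sigma$ is monotone, so it has bounded variation and $\sum_i \big|r_K(p_{i+1})-r_K(p_i)\big|\le r_K(y)-r_K(x)<\infty$. Moreover every interior $p_i$ lies in $C(K)$, so $\Theta_K$ at $p_i$ contains at least two directions; I fix for each $i$ a shortest path $\tau_i:[0,r_K(p_i)]\to X$ from $p_i$ to a foot $f_i\in P_K(p_i)$, selected consistently on the side of the arc prescribed by the sector. Since $\sigma$ stays a definite distance from $K$, one also has $\inf_i r_K(p_i)>0$, which will supply a uniform constant below.

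The engine is the hinge form of Toponogov's theorem. For consecutive interior points I compare the configuration carried by $\tau_i$ and $\tau_{i+1}$ with its model in $\bbM^2_k$. Writing $\theta_i$ for the angle that locates $\tau_i$ (its foot and direction) and $\Delta\theta_i$ for the angular increment between steps $i$ and $i+1$, the curvature $\ge k$ condition bounds $d(p_i,p_{i+1})$ above by the far side of the comparison hinge, and expanding by the law of cosines in $\bbM^2_k$ gives an estimate of the schematic form
\[ d(p_i,p_{i+1})\ \le\ C_k\Big(\big|r_K(p_{i+1})-r_K(p_i)\big| + \rho\,\Delta\theta_i\Big), \]
where $C_k$ depends only on $k$, on $\rho$, and on the positive lower bound $\inf_i r_K(p_i)$. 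Summing, the first term is controlled by the bounded variation of $r_K\circ\sigma$ from the previous paragraph, so everything reduces to bounding $\sum_i \Delta\theta_i$, the total turning of the family $\{\tau_i\}$.

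The control of $\sum_i\Delta\theta_i$ is where I expect the real difficulty, and it is the step that genuinely uses the Alexandrov-surface hypotheses. Because $X$ has no branching geodesics (Lemma \ref{lem: no branch}) and the $\tau_i$ are chosen on one side inside nested sectors, these shortest paths do not cross; hence their feet $f_i$ and their directions are \emph{monotonically ordered} along $\sigma$, and the strips they bound tile a single region $\Omega\subseteq\overline{R_\rho(x)}$. Applying the Gauss--Bonnet estimate for Alexandrov surfaces to $\Omega$ — equivalently, using that the curvature of $X$ is a finite signed measure on the compact surface, against which the exterior angles accumulate — caps $\sum_i\Delta\theta_i$ by a constant depending only on $2\pi$ and on the total curvature carried by $B_\rho(x)$. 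Combined with the hinge estimate this yields $\sum_i d(p_i,p_{i+1})\le C<\infty$ uniformly in the partition, so $\sigma$ is rectifiable. The main obstacle is precisely this bookkeeping: one must verify that the $\tau_i$ are genuinely non-crossing and consistently oriented, so that the turning does not cancel and re-accumulate, and then convert ``no branching $+$ lower curvature bound'' into a quantitative cap on the accumulated exterior angles. Handling a general compact $K$, rather than the single foot point of the classical cut-locus-of-a-point situation, is what forces the monotone-feet argument; once the paths are correctly ordered, the comparison and summation are routine and follow \cite{ST1996}.
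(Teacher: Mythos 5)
The paper does not actually reprove this lemma: it is quoted from \cite{ST1996}, and the surrounding text states that the proofs there apply verbatim since a wedge is a special case of a sector. So the relevant comparison is with Shiohama--Tanaka's argument, and your outline has the right overall shape for it --- estimate each $d(\sigma(t_i),\sigma(t_{i+1}))$ by a comparison hinge built from shortest paths to $K$, and cap the accumulated angles using the lower curvature bound. As written, though, the proposal has a load-bearing gap.

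The gap is the claim that $r_K\circ\sigma$ is monotone ``because $\sigma$ is obtained by pushing into the sector away from its apex.'' Moving away from the apex $x$ inside $B_\rho(x)$ is not the same as moving monotonically away from (or toward) $K$: the Basic Lemma only delivers a Jordan arc in $C(K)\cap\overline{R_\rho(x)}$, and the distance to $K$ restricted to a cut-locus arc can have interior local extrema (a farthest point from $K$ lying in the interior of an edge of $C(K)$ is the standard example) and could a priori oscillate. The only free estimate on $\sum_i\big|r_K(p_{i+1})-r_K(p_i)\big|$ comes from $r_K$ being $1$-Lipschitz, which bounds that sum by the length of $\sigma$ --- precisely the quantity you are trying to prove finite --- so the first of the two terms your final bound rests on is not controlled. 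The bounded variation of $r_K\circ\sigma$ has to be extracted from the same comparison and angle bookkeeping (or the hinge estimate must be arranged so that the $r_K$-differences telescope with signs), and that is where much of the actual work in \cite{ST1996} lives. Two smaller points: the assertion that every interior $p_i$ has at least two directions in $\Theta_K$ spanning sectors on both sides is a feature of the arc furnished by the Basic Lemma's construction, not a consequence of $p_i\in C(K)$ (cut points may admit a unique shortest path to $K$); and the Gauss--Bonnet/total-turning step you defer as ``routine'' bookkeeping is, together with the variation bound above, essentially the entire proof. The proposal is a reasonable roadmap to the cited argument, but it is not yet a complete one.
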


\begin{cor}\label{cor: arc}
Let \(W_{\rho}(x)\) be a wedge at \(x\in E(A,B)\). Then there exists a point \(y\in E(A,B) \cap W_\rho(x)\) and a rectifiable Jordan arc \(\sigma: [0,1]\to E(A,B) \cap \overline{W_\rho(x)}\) such that \(\sigma(0) = x\) and \(\sigma(1)=y\). 
\end{cor}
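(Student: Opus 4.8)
The plan is to leverage the inclusion $E(A,B) \subseteq C(A\cup B)$ together with the fact that a wedge at $x \in E(A,B)$ is, by definition, a particular sector at $x$ viewed as a cut point of $K = A\cup B$. Since Lemma \ref{lem: basic lemma} and Lemma \ref{lem: rectifiable} already produce a rectifiable Jordan arc inside the cut locus emanating from $x$ into any given sector, the bulk of the work is to verify that for a \emph{wedge} the resulting arc lands inside $E(A,B)$ (and not merely inside the larger set $C(A\cup B)$). So the strategy is to first invoke the two cited lemmas to obtain the arc, then upgrade ``$\sigma \subseteq C(K)$'' to ``$\sigma \subseteq E(A,B)$'' using the defining property of a wedge.

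First I would apply Lemma \ref{lem: basic lemma} with $K = A\cup B$ to the sector $W_\rho(x)$, obtaining a point $y \in C(A\cup B) \cap W_\rho(x)$ and a Jordan arc $\sigma:[0,1] \to C(A\cup B) \cap \overline{W_\rho(x)}$ with $\sigma(0)=x$, $\sigma(1)=y$; Lemma \ref{lem: rectifiable} then gives rectifiability for free. The key remaining claim is that every point $\sigma(s)$ lies in $E(A,B)$. Here I would use the structure of the wedge: by definition one side of $W_\rho(x)$ is bounded by a shortest path to $A$ and the other by a shortest path to $B$, so the two boundary geodesics emanating from $x$ separate the directions in $\Theta_A$ from those in $\Theta_B$ locally. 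The idea is that a point $z \in C(A\cup B)$ lying strictly inside the wedge must admit two distinct shortest paths to $A\cup B$; I would argue that if both of these were shortest paths to $A$ (resp.\ both to $B$), their directions would have to lie on the same side of the wedge's bounding geodesics, which — combined with no branching (Lemma \ref{lem: no branch}) and the separation of $\Theta_A$ from $\Theta_B$ furnished by Lemma \ref{lem: disjoint directions} — forces $z$ to have equal distance to $A$ and to $B$. Thus $z \in E(A,B)$.

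The main obstacle I expect is making rigorous the claim that an interior cut point of the wedge must be equidistant, rather than merely a cut point having two shortest paths to the \emph{same} one of $A$ or $B$. A priori, $C(A\cup B)$ contains cut points arising from two distinct shortest paths both going to $A$ (the ``ordinary'' cut locus of $A$ alone), and such points need not lie in $E(A,B)$. The delicate point is therefore to show that inside a wedge — whose two bounding sides go to \emph{different} sets — the cut points produced by the Basic Lemma cannot be of this same-set type. I would handle this by a continuity/connectedness argument along $\sigma$: the function $f_{AB}(z) = d(z,A) - d(z,B)$ vanishes at $x = \sigma(0)$, and I would show it cannot change sign along $\sigma$ without $\sigma$ exiting the wedge, using that crossing into the region $d(\cdot,A) < d(\cdot,B)$ or its complement would require $\sigma$ to cross one of the wedge's bounding geodesics (which lie on $\partial W_\rho(x)$, outside the open wedge). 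Combined with the fact that $\sigma \subseteq \overline{W_\rho(x)}$ stays within the closed wedge and that the Basic Lemma's construction keeps the arc in the cut locus, this pins $f_{AB}$ to zero along $\sigma$, giving $\sigma \subseteq E(A,B)$ as required.

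Finally, I would note that rectifiability of $\sigma$ is inherited directly from Lemma \ref{lem: rectifiable}, since the arc produced is exactly one of those constructed in Lemma \ref{lem: basic lemma}, so no additional argument is needed for that portion of the conclusion.
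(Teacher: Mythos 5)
There is a genuine gap in the ``upgrade'' step, which is exactly the step you correctly identify as the delicate one. Your resolution rests on the claim that for \(\sigma\) to pass from \(E(A,B)\) into the region \(\{z : d(z,A) < d(z,B)\}\) (or its counterpart for \(B\)) it would have to cross one of the wedge's bounding geodesics. That is false: both open regions meet the open wedge. Indeed, by Proposition \ref{prop: ES basics}(i) the points \(\gamma_A(t)\), \(t>0\), satisfy \(d(\cdot,A)<d(\cdot,B)\), so by continuity an open neighborhood of \(\gamma_A((0,\rho))\) lies in \(\{d(\cdot,A)<d(\cdot,B)\}\) and this neighborhood intersects the interior of \(W_\rho(x)\); likewise near the \(\gamma_B\) side. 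The interface between the two regions \emph{inside} the wedge is precisely \(E(A,B)\cap W_\rho(x)\) --- the very set you are trying to locate --- not \(\partial W_\rho(x)\). So nothing prevents the black-boxed arc from the Basic Lemma from wandering into \(\{d(\cdot,A)<d(\cdot,B)\}\cap W_\rho(x)\), where its points would be cut points of \(A\) alone (two shortest paths to \(A\), or a maximal shortest path to \(A\)) rather than points of \(E(A,B)\). Your continuity argument also only controls sign changes: even if \(f_{AB}\) never changed sign, it could be strictly positive on part of \(\sigma\) with \(f_{AB}(\sigma(0))=0\), and the intermediate value theorem gives nothing there. Ruling out ``same-set'' cut points in a small wedge is a real assertion needing its own argument (e.g.\ a compactness/no-branching argument on sequences of such points converging to \(x\)), and it is not supplied.

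The paper avoids this entirely: it does not apply Lemma \ref{lem: basic lemma} as a black box and then upgrade, but rather observes that the \emph{proof} of the Basic Lemma, re-run verbatim in a wedge, produces equidistant points by construction. In that construction the arc is obtained as a limit of ``jump'' points separating the points of the sector whose shortest paths to \(K=A\cup B\) sweep toward one bounding geodesic from those sweeping toward the other; since in a wedge one bounding geodesic goes to \(A\) and the other to \(B\), the jump points are exactly where \(d(\cdot,A)=d(\cdot,B)\), so membership in \(E(A,B)\) comes for free. Rectifiability is then inherited from Lemma \ref{lem: rectifiable} just as you say. To repair your version you would need to either reproduce that construction or prove separately that, for \(\rho\) sufficiently small, \(C(A\cup B)\cap W_\rho(x)=E(A,B)\cap W_\rho(x)\); as written, the proposal does not establish that \(\sigma\) lands in \(E(A,B)\).
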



\subsection{The Direction of the Equidistant Set} 

Although the equidistant set is not generally a smooth curve, given any point in the equidistant set, we can at least predict the directions from the starting point which stay in the equidistant set. These directions are precisely the bisectors of any wedge at that point (Theorem \ref{thm: bisector}). To see why this is true, first recall our function \(f_{AB}\) defined by 
\[ f_{AB}(x) = d(x,A) - d(x,B) . \] 
Since \(f_{AB}\) is a linear combination of distance functions (to compact sets), it admits a one-sided directional derivative (see \cite{FOV} for a detailed exposition of this derivative). 

\begin{thm}\label{thm: one sided deriv}
Let \(X\) be an Alexandrov space, \(\gamma:[0,T]\to X\) a shortest path, and \(K\) a compact set not containing \(\gamma(0)\). Then 
\[ \lim_{t\to 0} \frac{d(\gamma(t),K) - d(\gamma(0),K)}{t} = -\cos(\angle_{\min}) \] 
where \(\angle_{\min}\) is the infimum of angles between \(\gamma\) and any shortest path connecting \(\gamma(0)\) to \(K\). 
\end{thm}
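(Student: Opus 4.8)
The plan is to establish the two inequalities
\[
\limsup_{t\to0^+}\frac{d(\gamma(t),K)-r}{t}\le-\cos\angle_{\min}
\qquand
\liminf_{t\to0^+}\frac{d(\gamma(t),K)-r}{t}\ge-\cos\angle_{\min}
\]
separately, where $p=\gamma(0)$ and $r=d(p,K)$; since the one-sided derivative is already known to exist, these together pin down its value. As a preliminary I would note that the set $\Theta_K\subseteq S_pX$ of directions of shortest paths from $p$ to $K$ is compact (by the argument of Lemma \ref{lem: disjoint directions}, using Corollary \ref{cor: arzela} and compactness of $K$), so that $\angle_{\min}=\inf_{\theta\in\Theta_K}\angle([\gamma],\theta)$ is attained at some $\theta_*\in\Theta_K$; I fix a shortest path $\sigma_*$ from $p$ to a nearest point $q_*\in P_K(p)$ with direction $\theta_*$.

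For the upper bound I would invoke the hinge form of the Toponogov comparison theorem, which is equivalent to the curvature $\ge k$ condition. Applied to the hinge at $p$ with sides $\gamma|_{[0,t]}$ and $\sigma_*$ and included angle $\angle_{\min}$, it gives $d(\gamma(t),q_*)\le\tilde c_k(t,r,\angle_{\min})$, where $\tilde c_k(t,r,\alpha)$ is the length of the side opposite the angle $\alpha$ in the comparison hinge in $\bbM^2_k$ with adjacent sides $t$ and $r$. Expanding the $k$-law of cosines to first order in $t$ shows $\tilde c_k(t,r,\alpha)=r-t\cos\alpha+o(t)$ for every value of $k$. Since $d(\gamma(t),K)\le d(\gamma(t),q_*)$, dividing by $t$ and passing to $\limsup$ yields the upper bound.

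The lower bound is the main obstacle, since the curvature bound yields only upper estimates on distances (fat triangles) and so does not bound $d(\gamma(t),K)$ from below directly. I would argue by contradiction: if it fails there are $t_n\downarrow0$ with $d(\gamma(t_n),K)\le r-t_n(\cos\angle_{\min}+\varepsilon)$. Choosing $q_n\in P_K(\gamma(t_n))$ and a shortest path $\sigma_n$ from $p$ to $q_n$, the triangle inequality forces $d(p,q_n)\to r$, so by Corollary \ref{cor: arzela} a subsequence of the shortest paths $\tau_n$ from $\gamma(t_n)$ to $q_n$ converges to a shortest path from $p$ to $P_K(p)$, that is, a shortest path to $K$. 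Feeding the assumed fast decrease into the $k$-law of cosines shows the comparison angle $\tilde\beta_n=\angle^k(\gamma(t_n),q_n)$ at $p$ satisfies $\liminf_n\tilde\beta_n<\angle_{\min}$. I would then contradict this by bounding $\tilde\beta_n$ from below: the comparison angle at $q_n$ is $O(t_n)$, so the model angle-sum gives $\tilde\beta_n=\pi-\angle^k_{\gamma(t_n)}(p,q_n)+o(1)$; the curvature bound gives $\angle^k_{\gamma(t_n)}(p,q_n)\le\angle(\gamma^-,\tau_n)$, the actual angle at the interior point $\gamma(t_n)$ between the backward direction $\gamma^-$ and $\tau_n$; the relation $\angle(\gamma^-,\tau_n)+\angle(\gamma^+,\tau_n)=\pi$ for adjacent angles along the geodesic $\gamma$ converts this into $\tilde\beta_n\ge\angle(\gamma^+,\tau_n)+o(1)$; and lower semicontinuity of the angle, with $\tau_n$ converging to a shortest path to $K$, gives $\liminf_n\angle(\gamma^+,\tau_n)\ge\angle_{\min}$, the contradiction.

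The technical crux is this final chain. It rests on two standard but delicate Alexandrov-geometric facts that I would need to invoke carefully: that adjacent angles at an interior point of a shortest path sum to exactly $\pi$ on a curvature-$\ge k$ surface (which uses that cone angles are at most $2\pi$, so that $S_xX$ is a circle of length $2\pi$ at such points), and that the angle is lower semicontinuous under the convergence $\gamma(t_n)\to p$, $\tau_n\to\tau$. I expect the verification of these two properties, rather than the comparison estimates themselves, to be where the real work lies.
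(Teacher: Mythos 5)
The paper does not actually prove Theorem \ref{thm: one sided deriv}: it is quoted as background, with the proof deferred to the reference \cite{FOV}, so there is no internal argument to compare yours against. On its own terms, your proposal is the standard proof of the first variation formula for distance functions in Alexandrov geometry, and it is structurally sound. The upper bound is exactly the hinge form of the comparison (equivalent to the paper's defining angle condition via monotonicity of the \(k\)-law of cosines), and your contradiction scheme for the lower bound --- extract convergent \(\tau_n\), convert the hypothesized fast decrease into a comparison angle at \(p\) strictly below \(\angle_{\min}\), then bound that comparison angle from below through the comparison angle at the interior point \(\gamma(t_n)\), the adjacent-angle identity, and semicontinuity of angles --- is the classical route (see \cite{bgp92} or \cite[Chapter 4]{bbi2001}). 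The two facts you flag are indeed the crux, and both are standard. One correction on the first of them: your proposed justification of \(\angle(\gamma^-,\tau_n)+\angle(\gamma^+,\tau_n)=\pi\) via ``\(S_xX\) is a circle of length \(2\pi\)'' is specific to surfaces without boundary, whereas the theorem is stated for Alexandrov spaces of arbitrary dimension; use instead the dimension-free argument that the sum is \(\geq \angle(\gamma^-,\gamma^+)=\pi\) by the triangle inequality for angles, and \(\leq\pi\) because under a lower curvature bound each angle is the supremum of its (monotone) comparison angles, whose sum is at most \(\pi\) by Alexandrov's lemma. Two further small points to make explicit: all limits take place inside a closed metric ball, which is compact by Theorem \ref{hopf-rinow}, so Corollary \ref{cor: arzela} applies even though \(X\) itself need not be compact; and the parenthetical ``since the one-sided derivative is already known to exist'' should be dropped --- it is unnecessary (the two inequalities alone force existence and identify the value) and would otherwise be circular. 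With those adjustments the proof is complete.
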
 

Letting \(x\) be a point of \(E(A,B)\), and \(W_\rho(x)\) be a wedge at \(x\), if \(\gamma\) is a shortest path emanating from \(x\), Theorem \ref{thm: one sided deriv} gives us 
\[ \lim_{t\to 0^+} \frac{f_{AB}(\gamma(t)) - f_{AB}(\gamma(0))}{t} 
= -\cos\big( \angle(\gamma , \Theta_A) \big) + \cos\big( \angle(\gamma , \Theta_B) \big) . \] 
Since \(E(A,B)\) is the level set \(f_{AB} = 0\), the directional derivative in the equidistant set should be \(0\). Therefore, we expect the equidistant set to be locally well approximated by shortest paths satisfying 
\[\cos\big( \angle(\gamma , \Theta_A) \big) = \cos\big( \angle(\gamma , \Theta_B) \big) . \] 

Now let \(\gamma_A\) and \(\gamma_B\) be the shortest paths to \(A\) and \(B\), respectively, which bound our wedge \(W_\rho(x)\). Given that there are no other shortest paths from \(x\) to \(A\) or \(B\) in the wedge, 
\[ \angle(\gamma , \Theta_A) = \angle (\gamma, \Theta_B) \iff \angle(\gamma,\gamma_A) = \angle(\gamma , \gamma_B) . \] 
Thus, we expect the direction of the equidistant set to be exactly halfway between the directions which determine the wedge. 
Note however, there may not be an actual shortest path which realizes the desired direction, as the following example shows. 

\begin{ex}\label{ex: flat sphere} 
Let \(C\) be the compact Alexandrov surface obtained by doubling the closed unit disk. 
If \(a\) is the point at the center of one disk and \(b\) is the point at the center of the other disk, then \(E(a,b)\) is precisely the identified boundary of the disks. At any point of \(E(a,b)\), the space of directions is a unit circle, but there are no shortest paths which realize the two directions along the identified boundary. 
\end{ex} 

Although there may not be a shortest path which admits the desired direction, we can always achieve the desired direction as a limit of shortest paths. Given a sequence \(\{x_n\}_n \subseteq E(A,B) \cap W_\rho(x)\) such that \(x_n \to x\), if \(\gamma_n\) is the shortest path from \(x\) to \(x_n\), we expect to find 
\[ \lim_{n\to\infty} \Bigg( \lim_{t\to 0^+} \frac{f_{AB}\big(\gamma_n(t)\big) - f_{AB}(x)}{t} \Bigg) = 0 \]
since \(f_{AB}(x_n) = f_{AB}(x) = 0\). This idea is formalized by the Theorem \ref{thm: bisector}, but first we establish a preliminary lemma.

\begin{lem}\label{lem: desired direction} 
Let \(x\in E(A,B)\) be a wedge at \(x\), and let \(W_\rho(x)\) be a wedge at \(x\) bounded by shortest paths \(\gamma_A\) and \(\gamma_B\) to \(A\) and \(B\) respectively. If there exists a shortest path \(\eta: [0,T] \to \overline{W_\rho(x)}\) with \(\eta(0) = x\) such that the image of \(\eta\) intersects \(E(A,B)\) in an infinite sequence of distinct points converging to \(x\), then 
\[ \angle ( \eta , \gamma_A) = \angle (\eta , \gamma_B) = \begin{cases} \frac{1}{2} \angle (\gamma_A , \gamma_B) & \text{ if } \angle (\eta,\gamma_A) \leq \frac{1}{2} \diam S_xX \\ \diam S_xX - \frac{1}{2} \angle (\gamma_A , \gamma_B) & \text{ if } \angle (\eta,\gamma_A) > \frac{1}{2} \diam S_xX . \end{cases} \] 
\end{lem}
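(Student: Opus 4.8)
The plan is to extract the vanishing of the one-sided derivative of $f_{AB}$ along $\eta$ and then translate the resulting equality of angles into the stated trigonometric identities using the circle structure of $S_xX$.

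First I would apply Theorem \ref{thm: one sided deriv} to each of the distance functions $d(\cdot,A)$ and $d(\cdot,B)$ along $\eta$, which gives
\[ \lim_{t\to 0^+} \frac{f_{AB}(\eta(t)) - f_{AB}(x)}{t} = -\cos\big(\angle(\eta,\Theta_A)\big) + \cos\big(\angle(\eta,\Theta_B)\big), \]
where $\angle(\eta,\Theta_A)$ denotes the distance in $S_xX$ from $[\eta]$ to the compact set $\Theta_A$ (exactly the $\angle_{\min}$ of Theorem \ref{thm: one sided deriv}), and similarly for $B$. Since the hypothesis provides points $\eta(t_n)\in E(A,B)$ with $t_n\to 0^+$, and $f_{AB}$ vanishes on $E(A,B)$, the difference quotient is identically $0$ along this sequence; as the limit above exists, it must equal $0$. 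Hence $\cos(\angle(\eta,\Theta_A)) = \cos(\angle(\eta,\Theta_B))$, and because both angles lie in $[0,\pi]$ where cosine is injective, I conclude $\angle(\eta,\Theta_A) = \angle(\eta,\Theta_B)$.

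Next I would pass from $\Theta_A,\Theta_B$ to the bounding directions $\gamma_A,\gamma_B$. Since $X$ is assumed without boundary, $S_xX$ is a circle of diameter $\diam S_xX \le \pi$, and the wedge $W_\rho(x)$ corresponds to an open arc bounded by $[\gamma_A]$ and $[\gamma_B]$ whose interior meets neither $\Theta_A$ nor $\Theta_B$; moreover $[\eta]$ lies in this open arc, since $[\eta]=[\gamma_A]$ or $[\eta]=[\gamma_B]$ would make $\eta$ a shortest path to $A$ or $B$ and hence (by Proposition \ref{prop: ES basics}(i)) force points of $\eta$ near $x$ off of $E(A,B)$. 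The complementary closed arc contains all directions of $\Theta_A\cup\Theta_B$ other than $[\gamma_A],[\gamma_B]$, and the nearest point of this complementary arc to the interior point $[\eta]$ is one of its two endpoints; thus every direction of $\Theta_A$ (resp.\ $\Theta_B$) other than $[\gamma_A]$ (resp.\ $[\gamma_B]$) is at angle at least $\min\{\angle(\eta,\gamma_A),\angle(\eta,\gamma_B)\}$ from $[\eta]$. Assuming without loss of generality $\angle(\eta,\gamma_A)\le\angle(\eta,\gamma_B)$, this yields $\angle(\eta,\Theta_A)=\angle(\eta,\gamma_A)$; the equality $\angle(\eta,\Theta_A)=\angle(\eta,\Theta_B)$ then forces a direction of $\Theta_B$ at angle $\angle(\eta,\gamma_A)$ from $[\eta]$, which, since the only complementary point realizing that minimal distance is the endpoint $[\gamma_A]\in\Theta_A$ (disjoint from $\Theta_B$ by Lemma \ref{lem: disjoint directions}), can only be $[\gamma_B]$ itself. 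Therefore $\angle(\eta,\gamma_A)=\angle(\eta,\gamma_B)$.

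Finally I would read off the value of this common angle from the circle geometry. A point equidistant (in the circle metric) from $[\gamma_A]$ and $[\gamma_B]$ is a midpoint of one of the two arcs they determine: the midpoint of the short arc is at angle $\tfrac12\angle(\gamma_A,\gamma_B)$ from each, while the midpoint of the long arc is at angle $\diam S_xX - \tfrac12\angle(\gamma_A,\gamma_B)$ from each. Since $[\eta]$ is one of these two midpoints, comparing $\angle(\eta,\gamma_A)$ with $\tfrac12\diam S_xX$ distinguishes the two possibilities and produces exactly the two cases in the statement. I expect the main obstacle to be the second step: carefully handling the possible multiplicity of $\Theta_A$ and $\Theta_B$ and the wrap-around of the circle, so as to be sure that $[\gamma_A]$ and $[\gamma_B]$ really are the directions realizing $\angle(\eta,\Theta_A)$ and $\angle(\eta,\Theta_B)$, with the disjointness of $\Theta_A$ and $\Theta_B$ doing the essential work.
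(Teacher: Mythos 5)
Your proof is correct and follows essentially the same route as the paper's: apply Theorem \ref{thm: one sided deriv} along the sequence \(\eta(t_n)\in E(A,B)\) to conclude \(\cos\big(\angle(\eta,\Theta_A)\big)=\cos\big(\angle(\eta,\Theta_B)\big)\), hence equality of the angles, and then identify \([\eta]\) as a midpoint of one of the two arcs of the circle \(S_xX\) determined by \([\gamma_A]\) and \([\gamma_B]\). Your intermediate step --- using the disjointness of \(\Theta_A\) and \(\Theta_B\) and the arc structure to justify that \(\angle(\eta,\Theta_A)=\angle(\eta,\gamma_A)\) and \(\angle(\eta,\Theta_B)=\angle(\eta,\gamma_B)\) --- is in fact spelled out more carefully than in the paper, which passes directly to the bounding directions by appeal to its preceding informal discussion.
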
 

\begin{rem} 
When \(\diam S_xX = \pi\), the angle formula matches our Euclidean intuition. 
\end{rem} 

\begin{proof} 
Let \(\{t_n\}_{n=1}^\infty\) be the sequence such that \(\eta(t_n) \in E(A,B)\) for all \(n\) and \(\eta(t_n) \to x\) (clearly \(t_n\to 0\)). 
By Theorem \ref{thm: one sided deriv}, 
\begin{align*} 
0 & = \lim_{n\to \infty} \frac{f_{AB} (\eta(t_n)) - f_{AB}(x)}{t_n} 
\\ & = \lim_{n\to \infty} \frac{d(\eta(t_n),A) - d(x,A)}{t_n} - \lim_{n\to \infty} \frac{d(\eta(t_n),B) - d(x,B)}{t_n} 
\\ & = \lim_{t\to 0^+} \frac{d(\eta(t),A) - d(\eta(0),A)}{t} - \lim_{t\to 0^+} \frac{d(\eta(t),B) - d(\eta(0),B)}{t}
\\ & = -\cos\big( \angle(\eta , \gamma_A) \big) + \cos\big( \angle( \eta,\gamma_B) \big) . 
\end{align*} 
Thus, \(\angle(\eta,\gamma_A) = \angle(\eta,\gamma_B)\), so \([\eta]\) is one of (up to) two directions in \(S_xX\) equidistant to \([\gamma_A]\) and \([\gamma_B]\). If the wedge is acute, then \(\angle(\eta,\gamma_A) = \frac{1}{2} \angle (\gamma_A , \gamma_B)\). Conversely, if the wedge is obtuse, then \(\angle(\eta,\gamma_A) = \diam S_xX - \frac{1}{2} \angle (\gamma_A , \gamma_B)\). 
\end{proof}

\begin{defn} 
Let \(\sigma:[0,T]\to X\) be a path in an Alexandrov space. If for every sequence \(t_n \to t^+\), the sequence of directions \(\{\gamma_n\}_n \subseteq S_xX\) from \(\sigma(t)\) to \(\sigma(t_n)\) converges, then we call the limiting direction the \emph{right tangent} at \(\sigma(t)\) and denote it \(\sigma^+(t)\). 
\end{defn}

The following theorem shows that the the equidistant set has a (one-sided) tangent direction and it is the bisector of the directions which determine the wedge. Compare to \cite[Theorem A]{herreros} or \cite[Lemma 2.1]{ST1996}.

\begin{thm}\label{thm: bisector} 
Let \(W_{\rho}(x)\) is a wedge at \(x \in E(A,B)\), bounded by shortest paths \(\gamma_A\) and \(\gamma_B\), to \(A\) and \(B\) respectively. For any curve 
\[ \sigma: [0,1] \to E(A,B) \cap \overline{W_{\rho}(x)} \] 
such that \(\sigma(0) = x\), the right tangent \(\sigma^+(0)\) exists and is the bisector of the directions \([\gamma_A]\) and \([\gamma_B]\). 
\end{thm}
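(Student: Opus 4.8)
The plan is to show that the right tangent $\sigma^+(0)$ exists by proving that every subsequential limit of the directions from $x$ to $\sigma(t_n)$ (as $t_n \to 0^+$) must coincide with the bisector of $[\gamma_A]$ and $[\gamma_B]$. Since $S_xX$ is compact (it is a circle or line segment of diameter $\leq \pi$), the set of directions $\{[\gamma_n]\}$ from $x$ to $\sigma(t_n)$ has convergent subsequences; the goal is to pin down the unique limit. First I would extract, from the curve $\sigma$, an auxiliary shortest path that realizes each limiting direction. Given a subsequence $t_{n_k}\to 0^+$ with directions $[\gamma_{n_k}]$ converging to some $[\eta] \in S_xX$, I would let $\gamma_{n_k}$ be the (linearly parametrized) shortest path from $x$ to $\sigma(t_{n_k})$; by Corollary~\ref{cor: arzela} a further subsequence converges uniformly to a shortest path $\eta:[0,T]\to \overline{W_\rho(x)}$ with $\eta(0)=x$ and direction $[\eta]$.

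\textbf{Relating the limit path to the equidistant set.} The key difficulty is that $\eta$ need not itself lie inside $E(A,B)$, so I cannot directly apply Lemma~\ref{lem: desired direction}. The plan is to instead argue that $\eta$ must be a limit of points of $E(A,B)$ along itself. Because each $\gamma_{n_k}$ terminates at a point $\sigma(t_{n_k}) \in E(A,B)$ and these endpoints converge to $x$ along $\eta$, I expect that the image of $\eta$ contains (or is arbitrarily well approximated by) an infinite sequence of equidistant points converging to $x$. If I can establish that $\eta$ itself meets $E(A,B)$ in such a sequence, then Lemma~\ref{lem: desired direction} applies verbatim and forces $\angle(\eta,\gamma_A) = \angle(\eta,\gamma_B)$, identifying $[\eta]$ as one of the (at most two) equidistant directions in $S_xX$. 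Since $\eta \subseteq \overline{W_\rho(x)}$ and the wedge is bounded by $\gamma_A$ and $\gamma_B$, only the interior bisector is admissible, ruling out the obtuse alternative.

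\textbf{The main obstacle} I anticipate is precisely this passage from ``$\eta$ is a limit of shortest paths whose endpoints are equidistant points converging to $x$'' to ``$\eta$ intersects $E(A,B)$ in a sequence converging to $x$.'' One clean way around it is to bypass Lemma~\ref{lem: desired direction} and instead apply Theorem~\ref{thm: one sided deriv} directly to the limit path $\eta$, using the convergence $d(\sigma(t_{n_k}),A) = d(\sigma(t_{n_k}),B)$ together with the uniform convergence $\gamma_{n_k}\to\eta$ to pass the one-sided directional derivative identity $-\cos(\angle(\eta,\gamma_A)) + \cos(\angle(\eta,\gamma_B)) = 0$ to the limit. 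This requires a semicontinuity argument: I would show that $\liminf$ of the directional derivatives of $f_{AB}$ along the $\gamma_{n_k}$ controls the derivative along $\eta$, which follows from continuity of the angle (guaranteed by the lower curvature bound) and the fact that $\angle(\eta,\gamma_A) = \lim_k \angle(\gamma_{n_k},\gamma_A)$ on the compact space of directions.

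\textbf{Concluding uniqueness.} Once every subsequential limit $[\eta]$ satisfies $\angle(\eta,\gamma_A)=\angle(\eta,\gamma_B)$ and lies in $\overline{W_\rho(x)}$, I would invoke the dichotomy from Lemma~\ref{lem: desired direction}: there are at most two such directions, the interior bisector and the exterior one, and only the interior bisector has a representative inside the wedge. Hence all subsequential limits coincide, which is exactly the statement that the full sequence $[\gamma_n]$ converges; therefore $\sigma^+(0)$ exists and equals the bisector of $[\gamma_A]$ and $[\gamma_B]$. I expect the write-up to be short once the limiting-direction identity is secured, with the bulk of the work concentrated in justifying the limit exchange in the directional-derivative computation.
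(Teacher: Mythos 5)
Your overall strategy --- extract subsequential limits of the directions from \(x\) to \(\sigma(t_n)\) in the compact space \(S_xX\) and show that every such limit must be the bisector --- is a genuinely different route from the paper's. The paper instead traps \(\sigma\) in a thin cone \(C_{\pm\varepsilon}=\exp_x\bigl(B_\varepsilon(\theta^*)\times(0,\delta)\bigr)\) about the candidate bisector and derives contradictions in two cases (\(\sigma\) eventually avoids the cone; \(\sigma\) oscillates across its boundary). The difference is not cosmetic: the two cases are engineered so that Theorem \ref{thm: one sided deriv} and Lemma \ref{lem: desired direction} are only ever applied to a \emph{fixed} shortest path (in case (i), a fixed \(\gamma\) with \([\gamma]\in B_\varepsilon(\theta^*)\) on a region where \(f_{AB}\) has constant sign; in case (ii), a fixed \(\gamma\) that \(\sigma\) crosses infinitely often), which is exactly the setting in which those results hold.

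The gap in your version is the step you yourself flag as the main obstacle, and the proposed workaround does not close it. Theorem \ref{thm: one sided deriv} evaluates \(\lim_{t\to 0}\bigl(d(\gamma(t),K)-d(\gamma(0),K)\bigr)/t\) along a single shortest path; what your argument needs is that for points \(y_k=\sigma(t_{n_k})\to x\) whose directions converge to \([\eta]\) one has \(d(y_k,K)-d(x,K)=-\cos\bigl(\angle(\eta,\Theta_K)\bigr)\,d(x,y_k)+o(d(x,y_k))\), i.e.\ a first-variation formula uniform over varying directions. One inequality here is cheap: routing through a fixed \(a\in P_A(x)\) and using comparison gives \(d(y_k,A)-d(x,A)\le -\cos\bigl(\angle(\gamma_{n_k},\gamma_A)\bigr)d(x,y_k)+o(d(x,y_k))\). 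But the reverse inequality requires controlling the shortest paths from the \emph{moving} points \(y_k\) back to \(A\), and it is precisely that half which the identity \(0=f_{AB}(y_k)-f_{AB}(x)\) needs in order to force \(\cos\angle(\eta,\gamma_A)=\cos\angle(\eta,\gamma_B)\); with only the one-sided bounds the identity yields nothing. The appeal to ``continuity of the angle'' also does not deliver this: under a lower curvature bound the angle is only lower semicontinuous with respect to convergence of geodesics, so in particular the Arzel\`a--Ascoli limit path need not have direction \(\lim_k[\gamma_{n_k}]\) (you are on safer ground working with convergence in \(S_xX\) directly and discarding the limit path altogether). None of this is irreparable --- a uniform first-variation inequality can be established in this setting --- but it is a substantive missing lemma rather than a routine limit exchange, and it is the very difficulty the paper's cone argument is built to avoid. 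Your concluding step (only the interior equidistant direction of \(S_xX\) is admissible for a curve confined to \(\overline{W_\rho(x)}\), hence all subsequential limits agree and the full sequence of directions converges) is fine.
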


\begin{proof} 
Let \(\theta^* \in S_xX\) be the direction which is the desired bisector of \([\gamma_A]\) and \([\gamma_B]\), the directions determining our wedge. 
For any given \(\varepsilon>0\), let \(B_\varepsilon(\theta^*)\) be the open interval of radius \(\varepsilon\) around \(\theta^* \in S_xX\), and define the set 
\[ C_{\pm \varepsilon} = \exp_x \Big( B_\varepsilon (\theta^*) \times (0,\delta) \Big) \] 
for some fixed \(\delta >0\). 
In order to prove the theorem, we will show that \(\sigma(s) \in C_{\pm \varepsilon}\) for all sufficiently small \(s>0\). 

Seeking a contradiction, we suppose the alternative possibilities: \\ 
(i) there is a constant \(c>0\) such that \(\sigma(s)\) is outside of \(C_{\pm \varepsilon}\) whenever \(0<s<c\); or \\ 
(ii) there is a sequence \(\{s_n\}_{n=1}^\infty\) with \(s_n\to 0^+\) such that infinitely many \(\sigma(s_n)\) are in \(C_{\pm \varepsilon}\) and infinitely many \(\sigma(s_n)\) are outside of \(C_{\pm \varepsilon}\). 

If case (i), then without loss of generality, we can assume that \(d(z,A)>d(z,B)\) for all \(z\) in \(C_{\pm\varepsilon}\). Letting \(\gamma\) be a shortest path with \([\gamma]\in B_{\varepsilon}(\theta^*)\) and \(\angle ( \gamma,\gamma_A ) < \angle (\gamma,\gamma_B)\), we have 
\begin{align*} 
0 & = \lim_{t\to 0^+} \frac{f_{AB}(\sigma(t)) - f_{AB}(x)}{d(\sigma(t),x)} 
\\ & \leq \lim_{t\to 0^+} \frac{f_{AB}(\gamma(t)) - f_{AB}(x)}{t} 
\\ & = -\cos\big( \angle(\gamma,\gamma_A) \big) + \cos\big( \angle(\gamma,\gamma_B) \big) 
\\ & < 0 
\end{align*} 
which is a contradiction. 

If case (ii), then we can find a shortest path \(\gamma\) with \(0 < \angle (\gamma , \theta^*) \leq \varepsilon\) such that \(\sigma\) intersects \(\gamma\) infinitely many times in a sequence which converges to \(x\). However, this contradicts Lemma \ref{lem: desired direction}. 

We conclude that \(\sigma(s)\in C_{\pm \varepsilon}\) for all \(s\) sufficiently small. Since \(\varepsilon\) can be taken arbitrarily close to \(0\), we find \(\sigma^+(0)\) exists and is the bisector direction \(\theta^*\). 
\end{proof}

\subsection{The Equidistant Set is a Simplicial 1-Complex} 

For sufficiently small radius, a wedge will only contain a single Jordan arc segment of the equidistant set. 

\begin{lem}\label{lem: single arc} 
Let \(x\in E(A,B)\) be given and let \(W_\rho(x)\) be a wedge at \(x\). There is a sufficiently small \(\delta\) such that the intersection of \(E(A,B)\) and \(\overline{W_\varepsilon(x)}\) is a Jordan arc for all \(0<\varepsilon < \delta\). 
\end{lem}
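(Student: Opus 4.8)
The plan is to produce a single continuous injective parametrization of $E(A,B)\cap\overline{W_\varepsilon(x)}$ by the radial coordinate $r(z)=d(x,z)$, for $\varepsilon$ small. Corollary \ref{cor: arc} already supplies one rectifiable Jordan arc in $E(A,B)\cap\overline{W_\rho(x)}$ emanating from $x$, and Theorem \ref{thm: bisector} pins the right tangent of any such curve at $x$ to the bisector direction $\theta^*$ of $[\gamma_A]$ and $[\gamma_B]$. What remains is to show that, after shrinking the radius, this arc accounts for \emph{all} of the equidistant set in the wedge and meets each metric sphere exactly once; a continuous injection of a compact interval into a Hausdorff space is then automatically a homeomorphism onto its image, i.e.\ a Jordan arc.

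First I would establish \emph{direction localization}: for every $\varepsilon'>0$ there is a $\delta>0$ so that every $z\in E(A,B)\cap W_\delta(x)$ with $z\ne x$ is joined to $x$ by a shortest path whose direction lies within $\varepsilon'$ of $\theta^*$ in $S_xX$. Suppose not; then there is a sequence $z_n\to x$ in the wedge with $z_n\in E(A,B)$ whose shortest paths $\gamma_{z_n}$ (linearly reparametrized to $[0,1]$) satisfy $\angle([\gamma_{z_n}],\theta^*)\ge\varepsilon'$. By Corollary \ref{cor: arzela} a subsequence converges uniformly to a shortest path $\eta$ from $x$ with $[\eta]\ne\theta^*$. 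Since $f_{AB}(z_n)=f_{AB}(x)=0$, passing to the limit in the first-variation formula of Theorem \ref{thm: one sided deriv} along the converging paths forces $-\cos\angle(\eta,\gamma_A)+\cos\angle(\eta,\gamma_B)=0$, whence $[\eta]=\theta^*$ --- a contradiction. This confines the whole equidistant set in $W_\delta(x)$ to a thin cone about $\theta^*$, ruling out stray components far from the bisector.

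Next I would prove \emph{one point per sphere}: for all small $t$, the circular arc $\partial B_t(x)\cap\overline{W_\rho(x)}$ meets $E(A,B)$ in exactly one point. Existence of at least one is the intermediate value theorem applied to $f_{AB}$ along this arc, whose endpoints lie on $\gamma_A$ and $\gamma_B$, where $f_{AB}<0$ and $f_{AB}>0$ respectively by Proposition \ref{prop: ES basics}(i). For uniqueness I would use that, to first order in $t$, the value of $f_{AB}$ at the point in direction $\theta$ behaves like $t\big(-\cos\angle(\theta,\gamma_A)+\cos\angle(\theta,\gamma_B)\big)$, a quantity strictly monotone in $\theta$ as the direction sweeps from $[\gamma_A]$ to $[\gamma_B]$ and vanishing only at $\theta^*$; hence $f_{AB}$ changes sign only once. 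Granting these two facts, define $z(t)$ to be the unique equidistant point at radius $t$ (with $z(0)=x$). Closedness of $E(A,B)=f_{AB}^{-1}(0)$ and of $\overline{W_\rho(x)}$, together with uniqueness, makes $t\mapsto z(t)$ continuous on $[0,\varepsilon]$, and it is injective since distinct radii give distinct points, so it is the desired Jordan arc.

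The main obstacle is the uniqueness half of the ``one point per sphere'' step. The first-order heuristic above is only an infinitesimal statement, whereas I need it uniformly over a genuine range of small radii $t$, and the exponential map need not be a homeomorphism near the origin --- indeed Example \ref{ex: flat sphere} shows the bisector direction $\theta^*$ may not even be realized by a shortest path. I therefore expect the real work to be converting the first-variation derivative of $f_{AB}$ into a uniform, non-infinitesimal monotonicity statement along the metric spheres (equivalently, ruling out by the Arzel\`a--Ascoli and first-variation contradiction scheme a sequence of spheres each carrying two equidistant points, whose limiting shortest-path directions would then both be forced to equal $\theta^*$). This is precisely the place where the curvature-bounded-below hypothesis and the absence of branching geodesics (Lemma \ref{lem: no branch}) must be invoked.
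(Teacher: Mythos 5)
Your reduction to ``one point per metric sphere'' is where the argument breaks, and the repair you sketch does not close it. Granting your direction-localization step, all you know about a hypothetical pair of distinct equidistant points $z_n\ne z_n'$ on $\partial B_{t_n}(x)$ is that the directions of shortest paths from $x$ to both of them converge to $\theta^*$; that is perfectly consistent and yields no contradiction, since two distinct points at the same radius can subtend an angle at $x$ tending to zero. The first-order monotonicity of $-\cos\angle(\theta,\gamma_A)+\cos\angle(\theta,\gamma_B)$ in $\theta$ controls only the leading term of $f_{AB}$ on the circle of radius $t$; the $o(t)$ error (which is all Theorem \ref{thm: one sided deriv} gives, and only along a \emph{fixed} path) can create several sign changes arbitrarily close to the bisector direction, so uniqueness does not follow. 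A related soft spot is the direction-localization step itself: since $z_n\to x$, the shortest paths $\gamma_{z_n}$ collapse to the constant path, so Corollary \ref{cor: arzela} produces no nonconstant limit $\eta$, and the limiting direction in $S_xX$ need not be realized by any shortest path (Example \ref{ex: flat sphere}); to ``pass to the limit'' you would need a first-variation formula with an error term uniform over varying directions, which is strictly stronger than Theorem \ref{thm: one sided deriv} as stated.

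The missing idea is topological rather than analytic. The paper starts from the single Jordan arc $\sigma$ supplied by Corollary \ref{cor: arc} and supposes some $x_n\in E(A,B)\setminus\sigma([0,1])$ lies in $W_{1/n}(x)$ for every $n$; after passing to a subsequence these points all lie in one component of $W_\rho(x)\setminus\sigma([0,1])$, say the one bounded by $\gamma_A$. A shortest path $\eta_n$ from $x_n$ to $B$ cannot meet $\sigma$ or $\gamma_A$ again (by Proposition \ref{prop: ES basics}(i) its interior lies in $\{z: d(z,B)<d(z,A)\}$, which is disjoint from $E(A,B)$ and from the interior of $\gamma_A$), so $\eta_n$ is trapped in that component; by Corollary \ref{cor: arzela} a subsequence converges to a shortest path from $x$ to $B$ lying on the $\gamma_A$-side of the wedge, contradicting the fact that a wedge contains no shortest paths from $x$ to $B$. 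If you want to salvage your radial parametrization, you would need essentially this trapping argument anyway to rule out extra intersection points with a given sphere, at which point the sphere-by-sphere bookkeeping is superfluous.
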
 

\begin{proof} 
By Corollary \ref{cor: arc}, there is a Jordan arc \(\sigma:[0,1]\to E(A,B) \cap \overline{W_\rho(x)}\) with \(\sigma(0) = x\). We assume that \(\rho\) is taken sufficiently small so that \(d(\sigma(0),\sigma(1))=\rho\). By way of contradiction, suppose that \(W_{1/n}(x)\) contains a point \(x_n\in E(A,B) \setminus \sigma([0,1])\) for all \(n\). Passing to a subsequence if necessary, we can assume that \(\{x_n\}_{n\geq 1/\rho}\) is contained in one of the two components of \(W_\rho(x) \setminus \sigma([0,1])\), so we will assume it is in the component whose boundary contains \(\gamma_A\), a shortest path from \(x\) to \(A\). For each \(n\), let \(\eta_n\) be a shortest path from \(x_n\) to \(B\). Since \(x_n\to x\), we must have a subsequence of \(\{\eta_n\}\) which converges uniformly to a shortest path from \(x\) to \(B\). However, this is impossible since \(\eta_n\) cannot cross \(\gamma_A\) or \(\sigma\), and \(W_{1/n}(x)\) contains no shortest paths from \(x\) to \(B\). 
\end{proof}

\begin{cor} 
There are only finitely many \(x\in E(A,B)\) which admit more than two wedges. 
\end{cor}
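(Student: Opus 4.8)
The plan is to argue by contradiction using a compactness/accumulation argument, with Lemma \ref{lem: single arc} as the key local input. Write \(\mathcal{B}\) for the set of points of \(E(A,B)\) admitting more than two wedges. Since \(f_{AB}\) is continuous, \(E(A,B) = f_{AB}^{-1}(\{0\})\) is closed in the compact space \(X\), hence compact. Thus if \(\mathcal{B}\) were infinite it would have an accumulation point \(x_\infty \in E(A,B)\), and I would reach a contradiction by exhibiting a neighborhood of \(x_\infty\) that meets \(\mathcal{B}\) only in \(\{x_\infty\}\).

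First I would establish the local ``star'' structure of \(E(A,B)\) near a point. Fix \(x_\infty \in E(A,B)\) with its finitely many wedges \(W^{(1)},\ldots,W^{(2m)}\) (Lemma \ref{lem: finite wedges}), apply Lemma \ref{lem: single arc} to each wedge, and take \(\varepsilon\) below the minimum of the resulting radii, so that \(E(A,B) \cap \overline{W^{(j)}_\varepsilon(x_\infty)}\) is a single Jordan arc \(\sigma_j\) running from \(x_\infty\) to \(\partial B_\varepsilon(x_\infty)\). I would then argue these arcs account for \emph{all} of \(E(A,B)\) in the ball: an equidistant point cannot lie on a boundary shortest path, since Proposition \ref{prop: ES basics}(i) forces \(d(\cdot,A) \neq d(\cdot,B)\) strictly along such a path; and it cannot lie in a non-wedge sector, since in a sector bounded by two shortest paths to the same set (say \(A\)) the one-sided derivative formula of Theorem \ref{thm: one sided deriv} yields \(\angle(\gamma,\Theta_A) < \angle(\gamma,\Theta_B)\) for interior directions \(\gamma\), whence \(f_{AB} < 0\) throughout that sector near \(x_\infty\). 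Consequently \(E(A,B) \cap \overline{B_\varepsilon(x_\infty)} = \bigcup_j \sigma_j\), a family of \(2m\) arcs meeting only at \(x_\infty\).

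Next I would show that no point of \(B_\varepsilon(x_\infty) \cap E(A,B)\) other than \(x_\infty\) can admit more than two wedges. Such a point \(y\) lies in the interior of exactly one arc \(\sigma_{j_0}\): it is not an endpoint, as \(y \neq x_\infty\) and \(y \notin \partial B_\varepsilon(x_\infty)\), and the arcs are pairwise disjoint off \(x_\infty\). Since the remaining arcs are compact and avoid \(y\), choosing \(\varepsilon'\) smaller than their distances to \(y\) and than \(d(y,x_\infty)\) gives the equality \(E(A,B) \cap B_{\varepsilon'}(y) = \sigma_{j_0} \cap B_{\varepsilon'}(y)\), an open subarc with \(y\) in its interior, so that \((E(A,B) \cap B_{\varepsilon'}(y)) \setminus \{y\}\) has exactly two components. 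But if \(y\) admitted more than two wedges, Corollary \ref{cor: arc} would produce at least three arc-germs of \(E(A,B)\) at \(y\) lying in disjoint sectors, forcing at least three components. This contradiction shows \(B_\varepsilon(x_\infty) \cap \mathcal{B} \subseteq \{x_\infty\}\), so \(x_\infty\) is not an accumulation point of \(\mathcal{B}\), contrary to its choice; hence \(\mathcal{B}\) is finite.

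I expect the main obstacle to be the rigorous verification that \(E(A,B)\) enters no non-wedge sector near \(x_\infty\), i.e.\ pinning down the sign of \(f_{AB}\) there through the angle comparison \(\angle(\gamma,\Theta_A) < \angle(\gamma,\Theta_B)\). Once the local star structure is in hand, the remainder is a mechanical assembly of the single-arc description (Lemma \ref{lem: single arc}, Corollary \ref{cor: arc}) with the elementary topological fact that deleting an interior point of a Jordan arc yields exactly two components, together with the accumulation-point argument.
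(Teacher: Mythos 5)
Your overall strategy is the same as the paper's: \(E(A,B)\) is compact, so an infinite set of points admitting more than two wedges would accumulate at some \(x_\infty \in E(A,B)\), and Lemma \ref{lem: finite wedges} together with Lemma \ref{lem: single arc} then forces a contradiction. The paper's own proof is much terser --- it passes directly from ``some wedge \(W_\rho(\bar x)\) contains infinitely many of these points'' to ``contradicts Lemma \ref{lem: single arc}'' --- and you are right that two things are elided there: that the accumulating points cannot approach \(x_\infty\) through a non-wedge sector or along a bounding shortest path, and that an interior point of the single Jordan arc cannot itself carry three or more wedges. Your handling of the second point (three arc-germs of \(E(A,B)\) in pairwise disjoint open sectors at \(y\), versus the at most two ends that a Jordan arc has at an interior point) is sound, granting the usual point-set care in choosing \(\varepsilon'\).

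The one step that does not close as written is the exclusion of non-wedge sectors via the sign of \(f_{AB}\). Theorem \ref{thm: one sided deriv} gives, for each individual direction \(\gamma\) into a sector bounded by two shortest paths to \(A\), that \(t \mapsto f_{AB}(\gamma(t))\) has negative one-sided derivative at \(t=0\); this yields \(f_{AB}(\gamma(t))<0\) for \(t\) small \emph{depending on} \(\gamma\), not \(f_{AB}<0\) on a uniform neighborhood of \(x_\infty\) inside the sector --- and you flag exactly this as the obstacle. The cleanest patch avoids the derivative entirely and reuses the compactness argument from the proof of Lemma \ref{lem: single arc}: if \(y_n \in E(A,B)\) lie in the open sector \(R\) with \(y_n \to x_\infty\), take shortest paths \(\eta_n\) from \(y_n\) to \(B\). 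By Proposition \ref{prop: ES basics}(i), \(d(\cdot,A)<d(\cdot,B)\) holds on the two bounding paths minus \(x_\infty\), while \(d(\cdot,B)<d(\cdot,A)\) holds on \(\eta_n\) minus \(y_n\), so \(\eta_n\) cannot meet the bounding paths and its initial portion stays in \(\overline{R}\). By Corollary \ref{cor: arzela} a subsequence converges to a shortest path from \(x_\infty\) to \(B\) lying, near \(x_\infty\), in \(\overline{R}\); this is impossible, since the open sector contains no shortest path from \(x_\infty\) to \(A\cup B\), and coincidence of initial direction with a bounding direction would force, via Corollary \ref{cor: subpath} and equality of lengths, a single shortest path terminating in both \(A\) and \(B\). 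With that substitution your proof is complete, and it is a more careful account of the argument than the paper's two-line version.
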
 

\begin{proof} 
If there was an infinite sequence of distinct points \(\{x_n\}_{n=1}^\infty \subseteq E(A,B)\) which admit more than two wedges, then by the compactness of \(E(A,B)\), there is a subsequence which converges to some \(\bar{x}\in E(A,B)\). By Lemma \ref{lem: finite wedges} there are only finitely many wedges at \(\bar{x}\), so there is a wedge \(W_\rho(\bar{x})\) containing infinitely many of these points for any \(\rho>0\), which contradicts Lemma \ref{lem: single arc}.  
\end{proof}

\begin{thm}\label{thm: simplicial} 
Let \(X\) be a compact Alexandrov surface (possibly with boundary). For any pair of disjoint nonempty compact subsets \(A,B\subseteq X\), the equidistant set \(E(A,B)\) is homeomorphic to a finite closed simplicial 1-complex. 
\end{thm}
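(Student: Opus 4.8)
The plan is to show that $E(A,B)$ is a finite graph (1-complex) by arguing that it is a compact set which, near every point, looks like a finite union of Jordan arcs emanating from that point, and that only finitely many points have more than two such arcs (the "vertices" of the complex), with finitely many arcs ("edges") connecting them. The preceding lemmas have essentially assembled all the local pieces; the work is to verify that they glue into a global finite simplicial structure and to handle the boundary case via doubling.

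First I would dispose of the boundary case immediately. If $\partial X \neq \emptyset$, pass to the doubled space $\widetilde{X}$ (Theorem \ref{thm: doubling}), which is a compact Alexandrov surface without boundary. The sets $A$ and $B$ sit in one copy of $X$ inside $\widetilde{X}$, and since $\rho < \inf\{d(a,b)\}$ controls the relevant distances, distances to $A$ and $B$ from points of $X \subseteq \widetilde{X}$ are unchanged; hence $E(A,B)$ computed in $X$ coincides with $E(A,B) \cap X$ computed in $\widetilde X$. So it suffices to prove the theorem when $X$ has empty boundary, which is the standing assumption of this section.

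Next, with $\partial X = \emptyset$, I would define the vertex set $V$ of the complex to be the (finite, by the Corollary) set of points admitting more than two wedges, together with finitely many auxiliary points to be chosen below. The key structural facts are already in hand. Lemma \ref{lem: finite wedges} gives finitely many wedges at each point; Lemma \ref{lem: single arc} says each wedge contributes exactly one Jordan-arc germ of $E(A,B)$ for small radius; and Theorem \ref{thm: bisector} pins down the tangent direction of that arc. Thus every point $x \in E(A,B)$ has a neighborhood in which $E(A,B)$ is homeomorphic to a finite number of arcs meeting only at $x$ — a "star" with $k$ prongs where $k$ is the number of wedges. Points with exactly two wedges are interior points of edges; points with more than two wedges (finitely many) are genuine vertices. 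I would then cover the compact set $E(A,B)$ by finitely many such star neighborhoods (using compactness), which immediately bounds the number of edges: each edge is a maximal arc joining two vertices (or a vertex to itself) with no branching in its interior, and a compactness/finiteness argument on the cover shows there are only finitely many of them. The rectifiability from Lemma \ref{lem: rectifiable} guarantees each edge is a genuine arc of finite length, so the resulting object is a finite graph.

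The main obstacle I anticipate is the \emph{global} finiteness of edges — promoting the purely local "star" picture to a statement that $E(A,B)$ decomposes into finitely many edges. The danger is an edge that wanders back near itself or accumulates, or a sequence of distinct short edges accumulating at a point; one must rule out that $E(A,B)$ has infinitely many components or infinitely many arcs of uniformly small size. I would control this by a Lebesgue-number argument: using compactness of $E(A,B)$ and the finiteness of wedges (hence of local prongs) at each point, extract a finite subcover by star neighborhoods with a uniform lower bound on prong length, so that each edge passes through only finitely many charts and any vertex is isolated from others. This forces both the vertex count and the edge count to be finite, and the identification of each edge with a $1$-simplex (via its rectifiable parametrization) yields the homeomorphism with a finite closed simplicial $1$-complex.
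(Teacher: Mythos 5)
Your treatment of the boundaryless case is essentially the paper's argument: place vertices at the finitely many points admitting more than two wedges, build edges by concatenating the single Jordan arcs supplied by Lemma \ref{lem: single arc} in each wedge, and use compactness of \(E(A,B)\) to get global finiteness (the paper is in fact terser here than you are, and your attention to the global edge count via a Lebesgue-number argument is a reasonable way to make its ``natural concatenation'' precise). The gap is in the boundary case. From the identity \(E_X(A,B)=E_{\widetilde X}(A,B)\cap X\) you conclude that ``it suffices to prove the theorem when \(X\) has empty boundary,'' but that reduction is incomplete: knowing that \(E_{\widetilde X}(A,B)\) is a finite closed simplicial \(1\)-complex does not by itself imply that its intersection with the closed half \(X\) is one. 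A priori an edge of the complex in \(\widetilde X\) could meet \(\partial X\) in an infinite closed set --- crossing it infinitely often, or running along it for a stretch --- in which case \(E_{\widetilde X}(A,B)\cap X\) need not be a finite union of arcs at all.

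The paper closes exactly this hole before cutting along the boundary. It first shows that no Jordan arc of \(E(A,B)\) can lie inside \(\partial X\): such an arc would make \(\partial X\) the bisector of a wedge (Theorem \ref{thm: bisector}), which forces the bounding shortest paths \(\gamma_A\) and \(\gamma_B\) to terminate at the same point, contradicting \(A\cap B=\varnothing\). Hence \(E(A,B)\) meets \(\partial X\) only where arcs of \(E(A,B)\) pass through it, and these crossing points cannot accumulate (a cluster point would contradict Lemma \ref{lem: single arc}), so there are only finitely many. Only then can one discard the reflected half of the doubled complex and insert the finitely many new boundary vertices to obtain a finite closed simplicial \(1\)-complex. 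You should supply this finiteness argument for \(E(A,B)\cap\partial X\); the remainder of your proof stands.
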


\begin{proof} 
First, suppose that \(X\) has no boundary. At each \(x\in E(A,B)\) which admits more than two wedges (of which there are finitely many), we place a vertex at \(x\). By compactness of \(E(A,B)\) and Lemma \ref{lem: single arc}, there is a natural concatenation of finitely many Jordan arcs, which forms an edge between appropriate vertices. In the case that a point \(x\) belongs to a component of \(E(A,B)\) that is a simple closed curve (so there are no points with more than two wedges) we simply place a vertex at \(x\) and then create our edge of Jordan arcs as before. Thus, \(E(A,B)\) is homeomorphic to a finite simplicial 1-complex. 

In the case that \(X\) has nonempty boundary, we apply the doubling theorem to get a space without boundary. By our discussion in the previous paragraph, \(E(A,B)\) is a finite simplicial 1-complex in this space. 
By compactness, the boundary \(\partial X\) can only intersect \(E(A,B)\) finitely many times. 
To see this, first note that \(\partial X \cap E(A,B)\) cannot contain a Jordan arc: If there is a wedge which is bisected by the boundary (Theorem \ref{thm: bisector}), then the directions that determine the wedge, \(\gamma_A\) and \(\gamma_B\), must terminate at the same point, which is a contradiction. 
Therefore, the intersection happens at Jordan arcs in \(E(A,B)\) passing through the boundary. If we assume there are infinitely many such intersections, then there must be a cluster point, but there can be no wedge at such a cluster point. Thus, removing the part of \(E(A,B)\) contained in the `doubled' part of the space and placing vertices at the finitely many points in \(\partial X\) leaves us with a finite simplicial 1-complex. 
\end{proof}

We can conclude from \cite[Theorem A]{ST1996} that Hausdorff dimension of equidistant sets (with the conditions of the above theorem) is always 1. 
However, it is possible to produce a cut locus with infinite 1-dimensional measure on a compact surface (see \cite[Theorem B]{itoh} or \cite[Example 4]{ST1996}). Equidistant sets on compact surfaces, on the other hand, will always have finite measure.

\begin{cor}\label{cor: finite measure} 
Let \(X\) be a compact Alexandrov surface. If \(A,B\subseteq X\) are nonempty disjoint and compact, then the \(1\)-dimensional Hausdorff measure of \(E(A,B)\) is nonzero and finite. 
\end{cor}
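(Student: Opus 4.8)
The plan is to treat the two assertions---positivity and finiteness---separately, reducing each to a statement about the lengths of rectifiable arcs by way of the standard fact that the \(1\)-dimensional Hausdorff measure \(\mathcal{H}^1\) of a rectifiable Jordan arc \(\sigma:[0,1]\to X\) coincides with its length \(L(\sigma)\). The whole argument is structural: it extracts the metric information it needs from the rectifiable arcs built in the previous sections rather than from any abstract description of \(E(A,B)\).

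For finiteness I would appeal to Theorem \ref{thm: simplicial}, but to its proof rather than to the bare homeomorphism. That argument exhibits \(E(A,B)\) as a finite union of edges, each edge being a finite concatenation of the Jordan arcs supplied by Corollary \ref{cor: arc}; finiteness of the number of arcs follows from compactness of \(E(A,B)\) together with Lemma \ref{lem: single arc}. Every such arc is rectifiable by Lemma \ref{lem: rectifiable}, hence of finite length in \(X\), so each edge is rectifiable with finite length, and \(E(A,B)\) is a finite union of rectifiable arcs. Using \(\mathcal{H}^1(\sigma([0,1])) = L(\sigma)\) on each arc together with countable subadditivity of \(\mathcal{H}^1\), the measure \(\mathcal{H}^1(E(A,B))\) is bounded above by the finite sum of the arc lengths and is therefore finite.

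For positivity it suffices to locate one nondegenerate rectifiable arc inside \(E(A,B)\). Since \(X\) is path connected the set \(E(A,B)\) is nonempty, so fix any \(x\in E(A,B)\); as recorded in the text, \(x\) admits at least one wedge \(W_\rho(x)\). Corollary \ref{cor: arc} then produces a rectifiable Jordan arc \(\sigma:[0,1]\to E(A,B)\cap\overline{W_\rho(x)}\) with \(\sigma(0)=x\) and endpoint \(\sigma(1)=y\in W_\rho(x)\). Because a wedge is an open sector from which the center \(x\) has been removed, \(y\neq x\), so \(\sigma\) is nondegenerate and \(L(\sigma)>0\). Monotonicity of \(\mathcal{H}^1\) then gives \(\mathcal{H}^1(E(A,B))\ge \mathcal{H}^1(\sigma([0,1])) = L(\sigma) > 0\).

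The argument is mostly bookkeeping, and the one step deserving care is the transition from Theorem \ref{thm: simplicial} to a genuine metric bound: the homeomorphism to a simplicial \(1\)-complex is purely topological and carries no control on length, so the finiteness must be harvested from the rectifiability of the actual arcs in \(X\) (Lemma \ref{lem: rectifiable}) and from the fact---again to be verified via Lemma \ref{lem: single arc} and compactness---that only finitely many such arcs are needed to exhaust \(E(A,B)\).
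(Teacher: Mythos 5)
Your proof is correct and follows essentially the same route as the paper: the paper covers \(E(A,B)\) by finitely many wedges each containing a unique rectifiable arc (Lemma \ref{lem: single arc} plus compactness) and bounds \(\mathcal{H}^1(E(A,B))\) below by one arc's measure and above by the finite sum, which is exactly the content of your two halves. Your detour through the proof of Theorem \ref{thm: simplicial} for finiteness rests on the same finite-cover-by-wedges argument, so there is no substantive difference.
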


\begin{proof} 
Let \(\{W_i\}_{i\in I}\) be an open cover of \(E(A,B)\) by wedges which admit a unique rectifiable Jordan arc in their interior (see Lemma \ref{lem: single arc}). 
Compactness of \(E(A,B)\) implies we can find a finite subcover \(\{W_k\}_{k=1}^n\). 
Let \(J_k\) be the unique rectifiable segment of \(E(A,B)\) contained in the closure of each wedge \(W_k\) of this finite subcover. 
Then we have 
\[ 0 < \scrH^1(J_1) \leq \scrH^1(E(A,B)) \leq \sum_{k=1}^n \scrH^1(J_k) < \infty . \qedhere \] 
\end{proof}


\subsection{A Bound on the Homology of the Equidistant Set} 

Now that we know the equidistant set is a \(1\)-complex, it is appropriate to ask if we can provide any further details on the structure. 
While the specifics of the equidistant set will depend greatly on the structure and placement of the sets \(A\) and \(B\), we can at least provide an upper bound on the dimension of the first homology group (Theorem \ref{thm: homology}). This bound depends on both the genus of the surface as well as the number of disconnected pieces of \(A\) and \(B\). 

The following theorem is a generalization of \cite[Theorem 4.2]{bv2006} for \(n=2\). 

\begin{thm}\label{thm: homology} 
Let \(X\) be a compact Alexandrov surface (without boundary). If \(A,B\subseteq X\) are nonempty disjoint and compact, then \(H_1(E(A,B)) = \Z^k\) for some positive integer \(k\) satisfying 
\[ 1 \leq k \leq \dim H_1 (X; \Z_2) + \dim H_0(A) + \dim H_0(B) - 1 . \] 
\end{thm}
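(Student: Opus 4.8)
The plan is to combine the fact that \(E:=E(A,B)\) is a finite graph with a Mayer--Vietoris--style analysis of the inclusion \(E\hookrightarrow X\), carried out with \(\Z_2\) coefficients so that the possibly non-orientable surface \(X\) still satisfies \(\dim H_2(X;\Z_2)=\dim H_0(X;\Z_2)=1\). By Theorem \ref{thm: simplicial}, \(H_1(E;\Z)\) is free, so \(H_1(E;\Z)=\Z^k\) with \(k=\dim_{\Z_2}H_1(E;\Z_2)\), and only the two inequalities on \(k\) need proof. I first record the separation: by definition \(X\setminus E = X_A\sqcup X_B\), where \(X_A=\{x: d(x,A)<d(x,B)\}\) and \(X_B=\{x: d(x,B)<d(x,A)\}\) are open and nonempty (they contain \(A\) and \(B\) respectively), so \(X\setminus E\) has some finite number \(F\ge 2\) of components (finite because \(E\) is a finite graph).

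Next I would split \(k\) along the inclusion-induced map \(\iota_*\colon H_1(E;\Z_2)\to H_1(X;\Z_2)\), writing \(k=\dim\ker\iota_*+\dim\operatorname{im}\iota_*\). The image term is bounded for free, \(\dim\operatorname{im}\iota_*\le \dim H_1(X;\Z_2)\), so everything reduces to computing \(\ker\iota_*\).

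The heart of the argument is the claim that \(\dim\ker\iota_*=F-1\). Fixing a triangulation of \(X\) with \(E\) as a subcomplex, a class in \(H_1(E;\Z_2)=Z_1(E;\Z_2)\) lies in \(\ker\iota_*\) exactly when it is the boundary of a \(2\)-chain of \(X\). Because each component \(\phi\) of \(X\setminus E\) is a connected surface, the only mod-\(2\) \(2\)-chains whose boundary is supported on \(E\) are the sums \(\sum_{i\in S}[\overline{\phi_i}]\) over subsets \(S\) of the \(F\) components; applying \(\partial\) shows \(\ker\iota_*\) is spanned by the \(F\) boundary cycles \(\partial\overline{\phi_i}\). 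Since the unique \(2\)-cycle is \([X]=\sum_i[\overline{\phi_i}]\), there is exactly one relation \(\sum_i\partial\overline{\phi_i}=0\), giving \(\dim\ker\iota_*=F-1\). In particular \(k\ge \dim\ker\iota_*=F-1\ge 1\), which is the lower bound.

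Finally I would bound \(F\) geometrically. Writing \(F=F_A+F_B\) for the numbers of components of \(X_A\) and \(X_B\), the shortest-path argument of Proposition \ref{prop: ES basics}(i) (exactly as used in Corollary \ref{cor: minimal sep}) shows that every point of \(X_A\) is joined inside \(X_A\) to a point of \(A\); since \(A\subseteq X_A\) and each connected component of \(A\) therefore lies in a single component of \(X_A\), distinct components of \(X_A\) absorb disjoint components of \(A\), so \(F_A\le \dim H_0(A)\) and likewise \(F_B\le \dim H_0(B)\). Hence \(\dim\ker\iota_*=F-1\le \dim H_0(A)+\dim H_0(B)-1\), and combining with \(\dim\operatorname{im}\iota_*\le\dim H_1(X;\Z_2)\) gives the stated upper bound. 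I expect the main obstacle to be the face computation of \(\ker\iota_*\): one must argue carefully that a mod-\(2\) \(2\)-chain with boundary on \(E\) is necessarily a union of entire faces, which relies on the connectedness of each face together with a triangulation adapted to \(E\), and requires some care at the vertices of \(E\) where several Jordan arcs meet and a single face may occupy several sectors.
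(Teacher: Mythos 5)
Your proof is correct, and its skeleton coincides with the paper's: both arguments reduce the theorem to (a) bounding the rank of the inclusion-induced map \(H_1(E;\Z_2)\to H_1(X;\Z_2)\) by \(\dim H_1(X;\Z_2)\); (b) showing its kernel has dimension \(F-1\), where \(F\geq 2\) is the number of components of \(X\setminus E\); and (c) the geometric estimate \(F\leq \dim H_0(A)+\dim H_0(B)\), obtained exactly as you do it, from the fact that every point of \(X_A\) is joined to \(A\) inside \(X_A\) (Proposition \ref{prop: ES basics}). The only genuine divergence is in step (b). The paper runs the long exact sequence of the pair \((X,E)\) and identifies \(H_2(X,E;\Z_2)\cong(\Z_2)^{F}\) by Lefschetz duality, so that \(\ker(i_1)=\im(\partial_2)\cong(\Z_2)^{F-1}\); you instead compute the kernel directly at the chain level, observing that a mod-\(2\) \(2\)-chain whose boundary is supported on \(E\) must be a sum of closures of components of \(X\setminus E\) (a \(2\)-simplex with an uncancelled edge off \(E\) forces its neighbor across that edge into the chain, and since each component minus the vertex set remains connected this propagates through the whole component), with the unique relation supplied by the fundamental class. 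The two computations are equivalent --- your cycles \(\partial\overline{\phi_i}\) are precisely the images under \(\partial_2\) of the generators of \(H_2(X,E;\Z_2)\) --- but yours is more elementary, avoiding duality at the cost of requiring a triangulation of \(X\) containing the embedded finite graph \(E\) (Theorem \ref{thm: simplicial}) as a subcomplex; this exists because a finite graph topologically embedded in a surface is tame, but it is a genuine technical input, which you correctly flag. A minor further difference: you get finiteness of \(F\) from the triangulation, whereas the paper gives a self-contained covering argument with balls of radius \(\varepsilon/3\); either suffices.
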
 

\begin{rem} 
Note that the \(k\) in the statement of Theorem \ref{thm: homology} is in fact finite, even though \(\dim H_0(A)+\dim H_0(B)\) could certainly be infinite. 
\end{rem} 

\begin{proof} 
For simplicity of notation, let \(E\) be the equidistant set \(E(A,B)\). 
Since the equidistant set is a simplicial \(1\)-complex (Theorem \ref{thm: simplicial}), \(\dim H_1(E) = \dim H_1 (E;\Z_2)\), so we will use \(\Z_2\) coefficients to avoid issues of orientability. 
Consider the following portion of the long exact sequence for the pair \((X,E)\): 
\[\begin{tikzcd}
\cdots \arrow[r, "\partial_{3}"] & H_{2}(E;\Z_2) \arrow[r, "i_{2}"] & H_2(X;\Z_2) \arrow[r, "p_{2}"] & H_{2}(X,E; \Z_2) 
\\ \arrow[r, "\partial_2"] & H_{1}(E;\Z_2) \arrow[r, "i_{1}"] & H_{1}(X;\Z_2) \arrow[r, "p_{1}"] & \cdots . 
\end{tikzcd} \]
First, we observe that \(E\) has empty interior (Corollary \ref{cor: no interior}), so \(H_{2}(E;\Z_2) = 0\). Furthermore, \(X\) is a manifold without boundary (Proposition \ref{prop: dimension}(iii)), so \(H_{2}(X;\Z_2) = \Z_2\). We claim that \(H_2(X,E;\Z_2) = (\Z_2)^\ell\) where \(\ell\) is a positive integer satisfying 
\begin{equation}\label{eq: dim inequality} 
2 \leq \ell \leq \dim H_0(A; \Z_2) + \dim H_0(B;\Z_2) . 
\end{equation}

Before proving the claim, we show how it proves the theorem. 
Filling in the known homology groups, our portion of the long exact sequence becomes 
\[\begin{tikzcd} 
\cdots \arrow[r, "\partial_3"] & 0 \arrow[r, "i_{2}"] & \Z_2 \arrow[r, "p_{2}"] & ( \Z_2 )^\ell \arrow[r, "\partial_2"] & H_{1}(E;\Z_2) \arrow[r, "i_{1}"] & H_{1}(X;\Z_2) \arrow[r, "p_{1}"] & \cdots . 
\end{tikzcd} \]
Exactness of this sequence tells us that \(\ker(\partial_2) = \im(p_2) = \Z_2\), so we must have \(\im(\partial_2) = (\Z_2)^{\ell-1}\). Given that \(\ell\geq 2\), this gives us the lower bound for \(\dim H_1(E;\Z_2)\). 
To get the upper bound, notice that \(\ker(i_1) = \im(\partial_2) = (\Z_2)^{\ell-1}\), and \(\im(i_1)\) is contained in \(H_1(X;\Z_2)\), so we have \(\dim H_1(E;\Z_2) \leq \dim H_1(X;\Z_2) + \ell - 1\).

To prove \(H_2(X,E;\Z_2) = (\Z_2)^\ell\) with \(\ell\) as in \eqref{eq: dim inequality}, we define the open sets 
\[ X_A = \{x\in X : d(x,A) < d(x,B) \} \quand X_B = \{x\in X : d(x,B) < d(x,A)\} . \] 
Let \(\ell_A\) and \(\ell_B\) be the number of components of \(X_A\) and \(X_B\) respectively. 
First, we will show that \(\ell_A\) (and subsequently, \(\ell_B\)) is finite. Let 
\[\varepsilon = \inf \{ d(a,b) : a\in A , b\in B\} . \] 
Since \(A\) and \(B\) are compact and disjoint, \(\varepsilon >0\). By compactness of \(A\), we can find finitely many \(a_i \in A\) such that \(A\subseteq \bigcup_{i=1}^N B_{\varepsilon/3}(a_i)\). Furthermore, by the definition of \(\varepsilon\), 
\[  \bigcup_{i=1}^N B_{\varepsilon/3}(a_i) \subseteq X_A . \] 
By Proposition \ref{prop: ES basics}(ii), \(X_A\) can have no more than \(N\) components (one for each \(a_i\)). Thus \(\ell_A\) is finite. To be more precise, one can use the same reasoning as Corollary \ref{cor: minimal sep} to see that if \(B_{\varepsilon/3}(a_i) \cap B_{\varepsilon/3}(a_j)\) is nonempty, then these balls lie in the same component of \(X_A\). 
It follows that \(1 \leq \ell_A \leq \dim H_0(A)\). 
Applying the same reasoning for \(\ell_B\), we see that \(X \setminus E\) has \(\ell_A + \ell_B\) components. We conclude, via Lefschetz duality, that  
\[ \dim H_2(X,E;\Z_2) = \dim H^0(X\setminus E;\Z_2) = \ell_A + \ell_B \] 
where \(2 \leq \ell_A + \ell_B \leq \dim H_0(A) + \dim H_0(B)\). 
\end{proof} 

If we additionally assume that \(A\) and \(B\) are connected, then Corollary \ref{cor: minimal sep} gives the following corollary to Theorem \ref{thm: homology}. 
This bound for the homology is precisely that found in \cite[Theorem 4.2]{bv2006} (for \(n=2\)) when \(A\) and \(B\) are singleton sets. 

\begin{cor} 
Let \(X\) be a compact Alexandrov surface (without boundary). If \(A,B\subseteq X\) are nonempty disjoint compact and connected, then 
\[ 1 \leq \dim H_1 ( E(A,B) ) \leq \dim H_1 (X; \Z_2) + 1 . \] 
\end{cor}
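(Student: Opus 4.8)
The plan is to obtain this as a direct specialization of Theorem \ref{thm: homology}, using the connectedness of \(A\) and \(B\) to sharpen the upper bound. The guiding observation is that a connected space has exactly one component, so \(\dim H_0(A) = \dim H_0(B) = 1\); the point is to translate this into an improvement of the \(\ell\)-term appearing inside the proof of Theorem \ref{thm: homology}.

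First I would revisit that proof, where the upper bound is established in the form \(\dim H_1(E(A,B)) \leq \dim H_1(X;\Z_2) + \ell - 1\), with \(\ell = \ell_A + \ell_B\) equal to the total number of components of \(X \setminus E(A,B)\), and where \(\ell_A, \ell_B\) are the component counts of \(X_A\) and \(X_B\). The relevant inequalities there are \(1 \leq \ell_A \leq \dim H_0(A)\) and \(1 \leq \ell_B \leq \dim H_0(B)\), together with the fact that \(E(A,B)\) is always separating, so \(\ell \geq 2\). Next I would invoke Corollary \ref{cor: minimal sep}: since \(A\) and \(B\) are nonempty, disjoint, compact, and connected, its proof shows that \(X_A\) and \(X_B\) are each a single component, whence \(\ell_A = \ell_B = 1\) and \(\ell = 2\). (Equivalently, one may just substitute \(\dim H_0(A) = \dim H_0(B) = 1\) into the inequality \eqref{eq: dim inequality}, forcing \(\ell = 2\).)

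Substituting \(\ell = 2\) into the upper bound yields \(\dim H_1(E(A,B)) \leq \dim H_1(X;\Z_2) + 1\), and the lower bound \(1 \leq \dim H_1(E(A,B))\) is inherited unchanged from Theorem \ref{thm: homology}. I expect no genuine obstacle here, as the corollary is a bookkeeping consequence of the theorem; the only point requiring care is confirming that connectedness yields \(\ell = 2\) \emph{exactly} rather than merely \(\ell \leq 2\). This is guaranteed precisely because the lower estimate \(\ell \geq 2\) holds for every separating equidistant set, so that Corollary \ref{cor: minimal sep} supplies the matching upper estimate and pins down \(\ell\).
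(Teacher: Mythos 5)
Your proposal is correct and matches the paper's intent exactly: the paper derives this corollary by combining Theorem \ref{thm: homology} with Corollary \ref{cor: minimal sep}, which is precisely your route of using connectedness to force \(\ell_A = \ell_B = 1\) (equivalently, substituting \(\dim H_0(A) = \dim H_0(B) = 1\) into the theorem's bound). No gaps; your extra care in confirming \(\ell = 2\) exactly is sound but not strictly needed for the stated inequality.
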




\section{Equidistant Sets in the Plane} 
\label{sec: the plane} 


We now use the techniques developed in previous sections to answer two prompts for equidistant sets in the plane given in \cite{ponce}.

\subsection{Equidistant Sets Determined by Connected Sets} 

First, we address the following problem: 
\begin{quote}
\cite[p.~32]{ponce} \textit{Problem}: Characterize all closed sets of \(\R^2\) that can be realized as the equidistant set of two connected disjoint closed sets. 
\end{quote}


Interestingly, the answer to this question has been known since 1975: In \cite[Theorem 1]{bell} it is shown that the equidistant sets determined by two nonempty disjoint closed connected subsets of the Euclidean plane is a connected topological 1-manifold (i.e. the equidistant set is homeomorphic to a circle or the real line). 
However, the result is misquoted in \cite{loveland} (and subsequently in \cite{ponce}) as being true only for \emph{compact} subsets.\footnote{This is likely due to Bell's own abstract, which claims the result for ``mutually disjoint plane continua'' (a continuum being generally understood to be a connected compact set), but the statement of the theorem, and its proof, is for connected closed sets.} 
In order to illustrate the strength of some of the techniques developed in Section \ref{sec: ES CBB}, we give a quick alternative proof of Bell's theorem.

\begin{thm}\label{thm: line or circle} 
If \(A\) and \(B\) are nonempty disjoint closed connected subsets of the Euclidean plane, then \(E(A,B)\) is a connected topological \(1\)-manifold. 
\end{thm}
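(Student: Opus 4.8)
The plan is to prove two things: first, that $E(A,B)$ is a $1$-manifold (i.e., has no vertices of degree $\neq 2$), and second, that it is connected. Both facts should follow from the simplicial $1$-complex structure established in Theorem \ref{thm: simplicial}, combined with the homology bound from Theorem \ref{thm: homology}, applied to a suitable compactification of the plane.

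\medskip

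The key difficulty is that the plane $\mathbb{R}^2$ is \emph{not} compact, so I cannot apply the results of Section \ref{sec: ES CBB} directly. My first step is therefore to pass to the one-point compactification $S^2 = \mathbb{R}^2 \cup \{\infty\}$, viewed as a compact Alexandrov surface without boundary (the round sphere, or more carefully, any compact Alexandrov surface metric whose restriction to a large disk agrees with the flat metric, so that shortest paths and hence $E(A,B)$ inside a bounded region are unchanged). Since $A$ and $B$ are closed and connected in $\mathbb{R}^2$, their closures in $S^2$ are compact and connected, though possibly sharing the point $\infty$ if both are unbounded. The first genuine obstacle is handling this: if $A$ or $B$ is unbounded, the natural thing is to work with $\bar A = A \cup \{\infty\}$ and $\bar B = B \cup\{\infty\}$, but then $\bar A \cap \bar B = \{\infty\}$, violating disjointness. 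I would resolve this by a limiting/exhaustion argument — intersect $E(A,B)$ with an increasing sequence of large balls $B_R(0)$, apply Theorem \ref{thm: simplicial} locally, and show that the local structure (degree-$2$ vertices only) holds at every finite point, independent of behavior near $\infty$.

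\medskip

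Next, to establish that $E(A,B)$ is a $1$-manifold, I would argue that every point $x\in E(A,B)$ admits \emph{exactly} two wedges, so that by Lemma \ref{lem: single arc} a neighborhood of $x$ in $E(A,B)$ is a single arc (and $x$ is an interior point of a curve, not a branch vertex). In the plane, $S_xX$ is always a circle of circumference $2\pi$, so $\diam S_x X = \pi$. Since $A$ and $B$ are each connected, I would use Corollary \ref{cor: minimal sep} (in its planar form): the sets $X_A$ and $X_B$ are each a single connected region, and $E(A,B)$ minimally separates them. A point with more than two wedges would force the complement $X\setminus E(A,B)$ to have extra components locally, or would force $\Theta_A$ (resp. $\Theta_B$) to be disconnected in $S_xX$ in a way incompatible with connectedness of $A$ and $B$ together with the minimal-separating property. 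The main technical point is ruling out odd or higher even valence: since wedges always alternate $A$-side/$B$-side (each wedge has one $\gamma_A$-boundary and one $\gamma_B$-boundary), more than two wedges at $x$ means the two regions $X_A, X_B$ both reach $x$ through multiple sectors, contradicting that each is a single component in a small disk around $x$.

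\medskip

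Finally, for connectedness, I would invoke Theorem \ref{thm: homology}: with $A,B$ connected we have $\dim H_0(A) = \dim H_0(B) = 1$, and the ambient surface $S^2$ has $\dim H_1(S^2;\mathbb{Z}_2) = 0$, giving $1 \le \dim H_1(E(A,B)) \le 0 + 1 + 1 - 1 = 1$. Combined with the $1$-manifold property, a compact connected $1$-manifold with $H_1 = \mathbb{Z}$ is a circle, and $H_1 = 0$ with the $1$-manifold property gives an arc, which after removing the compactification point $\infty$ yields a line; thus in $\mathbb{R}^2$ the set $E(A,B)$ is homeomorphic to a circle or to $\mathbb{R}$, hence a connected $1$-manifold. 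I expect the compactification bookkeeping near $\infty$ — ensuring that passing to $S^2$ does not introduce spurious components or alter the manifold structure at finite points — to be the main obstacle, and I would address it by the exhaustion argument sketched above together with the observation that $E(A,B)$ is minimal separating (Corollary \ref{cor: minimal sep}) and thus cannot split into several pieces.
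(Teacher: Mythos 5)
Your overall architecture (reduce to a compact Alexandrov surface so that the results of Section~4 apply, show every point of $E(A,B)$ has exactly two wedges using connectedness of $A$ and $B$, conclude via Lemma~\ref{lem: single arc}) matches the paper's, but two of your steps have genuine gaps. The first is the compactification. The paper does not pass to $S^2$; it localizes at each $x\in E(A,B)$ by taking the closed ball $\overline{B}_{3r}(x)$ with $r>d(x,A)$, which is already a compact Alexandrov surface (with boundary, curvature $\geq 0$), and checks that for every $y\in\overline{B}_r(x)\cap E(A,B)$ the metric projections to $A$ and $B$ land inside $\overline{B}_{3r}(x)$, so that $E\big(A\cap\overline{B}_{3r}(x),\,B\cap\overline{B}_{3r}(x)\big)$ agrees with $E(A,B)$ on $\overline{B}_r(x)$. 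This yields the local $1$-manifold statement with no bookkeeping at infinity. Your route through $S^2$ and Theorem~\ref{thm: homology} has problems that exhaustion cannot repair: if $A$ or $B$ is unbounded the compactified focal sets meet at $\infty$, so Theorem~\ref{thm: homology} does not apply; and even for bounded $A,B$ the set $E(A,B)$ is typically unbounded (two points give a line), so any metric modification outside a large disk changes $E$ in the modified region, and $H_1$ of the compactified equidistant set is not the $H_1$ of the planar one. Exhaustion by balls proves local statements at finite points, but connectedness is global, and that is exactly the statement you are trying to extract from the homology bound. The paper instead gets connectedness elementarily from Corollary~\ref{cor: minimal sep}: if $E(A,B)$ were disconnected, a single component of it would already separate the plane, contradicting minimality.

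The second gap is in the two-wedge count. Your justification --- that more than two wedges would contradict ``each of $X_A$, $X_B$ is a single component in a small disk around $x$'' --- is a non sequitur: global connectedness of $X_A$ (Corollary~\ref{cor: minimal sep}) does not imply that $X_A\cap B_\delta(x)$ is connected; a connected open set can reach $x$ through several sectors and be joined up far away from $x$. The argument the paper uses is a separation argument: if there were more than two wedges, there would be two shortest paths from $x$ to $A$ with directions to $B$ strictly between them on both sides; these two segments together with the connected set $A$ form a closed connected set disjoint from $B$ (by Proposition~\ref{prop: ES basics}(i)) having points of $B$ on both sides, so $B$ would be disconnected. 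With the localization replaced by the paper's $\overline{B}_{3r}(x)$ device, the wedge count repaired as above, and connectedness obtained from minimal separation rather than homology, your proof goes through.
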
 

\begin{proof} 
First, the fact that \(E(A,B)\) is connected is essentially a consequence of Corollary \ref{cor: minimal sep}. If \(E(A,B)\) is not connected, then it has at least two connected components. As such, we could find one connected component of \(E(A,B)\) which separates the plane, but this would be a proper subset of \(E(A,B)\), so it is not minimal separating. See also \cite[Theorem 4]{wilker} for an alternative proof of connectedness. 

Now, let \(x\in E(A,B)\) be given. In order to prove the theorem, we will show that for some \(\varepsilon>0\), \(E(A,B) \cap B_\varepsilon(x)\) is the interior of a Jordan arc, and therefore every point of \(E(A,B)\) is contained in a neighborhood homeomorphic to an open interval of the real line. 

Fix a real number \(r > d(x,A)\), and note that \(\overline{B}_{3r}(x)\) is a compact Alexandrov surface (with boundary) of curvature \(\geq 0\). 
We define the compact sets 
\[A_x = A\cap \overline{B}_{3r}(x) \quand B_x = B \cap \overline{B}_{3r}(x) \] 
and consider \(E(A_x,B_x)\) in the space \(\overline{B}_{3r}(x)\). 
Note that for every \(y\in \overline{B}_r(x) \cap E(A,B)\), if \(a_y\in P_A(y)\), then 
\[ d(x,a_y) \leq d(x,y) + d(y,A) \leq 2d(x,y) + d(x,A)  < 3 r \] 
so \(P_A(y) \subseteq \overline{B}_{3r}(x)\) (and the same is true for \(P_B(y)\)). As such, 
\[ E(A_x,B_x) \cap \overline{B}_r(x) = E(A,B) \cap \overline{B}_r(x) . \] 
Given that \(A\) and \(B\) are each connected, every \(x\in E(A,B)\) admits exactly two wedges (if there were more than two wedges at a point, we could use the set \(A\) and two distinct paths from \(x\) to \(A\) to separate \(B\) from itself), so let \(W^1_\rho(x)\) and \(W^2_\rho(x)\) be two such wedges at \(x\) (for some sufficiently small \(\rho>0\)).  
Applying Lemma \ref{lem: single arc} shows that we can find an \(\varepsilon\) with \(0<\varepsilon<\rho\) such that  
\[E(A,B) \cap \overline{B}_\varepsilon(x) = \big( E(A,B) \cap \overline{W^1_\varepsilon(x)} \big) \cup \big( E(A,B) \cap \overline{W^2_\varepsilon(x)} \big) \] 
is the image of a Jordan arc. 
\end{proof}

\subsection{Hausdorff Dimension of the Equidistant Set} 

We now answer the following question (the term \emph{focal sets} refers to the sets \(A\) and \(B\) which determine the equidistant set):

\begin{quote}
\cite[p.~32]{ponce} \textit{Question}: Does there exist an equidistant set in the plane with connected disjoint focal sets, having Hausdorff dimension greater than 1? What about other notions of dimension? How does the dimension of an equidistant set depend on the dimension of its focal sets? 
\end{quote}

Note that the question does not require the \emph{closure} of the sets be disjoint. This allows for interesting constructions, such as the example in \cite{wilker}, which shows an equidistant set (with disjoint focal sets) can have Hausdorff dimension 2. This is achieved by using two `interlocking combs' as the sets \(A\) and \(B\). In fact, as the following example illustrates, we can exploit overlapping closures to produce equidistant sets in the plane of any Hausdorff dimension between \(1\) and \(2\). 

\begin{ex} 
Let \(K\) be the (boundary of the) Koch snowflake. 
Define \(A\) as the bounded subset of \(\R^2 \setminus K\) and \(B\) as the unbounded subset of \(\R^2 \setminus K\). Then \(A\cap B = \varnothing\), but \(E(A,B)\) is precisely \(K\), which has Hausdorff dimension \(\log_3(4) \approx 1.26186\). 
\end{ex} 

On the other hand, using two sets with overlapping closure is `cheating' in some sense when working with equidistant sets, as we can have \(d(x,A) = d(x,B) = 0\) even when \(x\) is outside of \(A\) and \(B\). For this reason, it is more appropriate to ask the question for sets with disjoint closure. In this case, we find the equidistant set is always 1-dimensional, even if \(A\) and \(B\) are not connected.

\begin{thm}\label{thm: hausdorff dim} 
If \(A\) and \(B\) are nonempty disjoint closed subsets of the Euclidean plane, then the Hausdorff dimension of \(E(A,B)\) is \(1\). 
\end{thm}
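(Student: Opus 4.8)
The plan is to establish the two inequalities \(\dim_H E(A,B) \le 1\) and \(\dim_H E(A,B) \ge 1\) separately, exploiting the fact that Hausdorff dimension is countably stable. Writing \(\R^2 = \bigcup_{n\in\N} \overline{B}_n(0)\), we have \(\dim_H E(A,B) = \sup_n \dim_H\big(E(A,B)\cap \overline{B}_n(0)\big)\), so it suffices to control the dimension of \(E(A,B)\) on each closed ball, where a compactness argument brings the results of Section \ref{sec: ES CBB} to bear.

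For the upper bound, fix \(n\) and assume \(E(A,B)\cap \overline{B}_n(0)\) is nonempty (otherwise there is nothing to prove). Picking \(a_0 \in A\) and setting \(M = n + d(0,a_0)\), every \(x\in \overline{B}_n(0)\cap E(A,B)\) satisfies \(d(x,A) = d(x,B) \le d(x,a_0) \le M\), so all metric projections \(P_A(x), P_B(x)\) lie in \(\overline{B}_{n+M}(0)\). Following the truncation device from the proof of Theorem \ref{thm: line or circle}, I would set \(A' = A\cap \overline{B}_{n+M}(0)\) and \(B' = B\cap \overline{B}_{n+M}(0)\); these are compact and disjoint, and since straight segments between points of the convex ball \(\overline{B}_{n+M}(0)\) stay inside it, the intrinsic equidistant set \(E(A',B')\) computed in \(\overline{B}_{n+M}(0)\) agrees with \(E(A,B)\) throughout \(\overline{B}_n(0)\). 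Because \(\overline{B}_{n+M}(0)\) is a compact Alexandrov surface with boundary of curvature \(\ge 0\), Theorem \ref{thm: simplicial} applies to the disjoint nonempty compact pair \((A',B')\) and shows that \(E(A',B')\) is a finite simplicial \(1\)-complex; hence \(E(A,B)\cap \overline{B}_n(0) \subseteq E(A',B')\) has Hausdorff dimension at most \(1\). Taking the supremum over \(n\) gives \(\dim_H E(A,B) \le 1\).

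For the lower bound it is enough to produce a single nondegenerate arc inside \(E(A,B)\). Since \(\R^2\) is a path-connected proper length space, \(E(A,B)\) is nonempty; choosing \(x\in E(A,B)\) and localizing to a compact ball exactly as above, Corollary \ref{cor: arc} furnishes a Jordan arc \(\sigma:[0,1]\to E(A',B')\cap\overline{W_\rho(x)}\) with \(\sigma(0)=x\) and \(\sigma(1)\ne x\); taking the wedge radius small enough that this arc lies in \(\overline{B}_n(0)\) places its image inside \(E(A,B)\). Any connected subset of a metric space containing at least two points has \(\scrH^1\) bounded below by its diameter, so \(\sigma([0,1])\) has Hausdorff dimension \(1\), giving \(\dim_H E(A,B)\ge 1\). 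Combining the two bounds yields the claim.

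The main obstacle is the localization bookkeeping in the upper bound: one must verify carefully that truncating \(A\) and \(B\) to a sufficiently large ball changes neither the relevant distances nor the equidistant set inside \(\overline{B}_n(0)\), and that the closed ball genuinely satisfies the hypotheses of Theorem \ref{thm: simplicial} as a compact Alexandrov surface with boundary. A pleasant feature is that no lower bound on \(\inf\{d(a,b): a\in A, b\in B\}\) is needed, even though \(A\) and \(B\) may approach one another at infinity: the truncations \(A'\) and \(B'\) are compact and disjoint, hence automatically at positive distance, which is all that the machinery of Section \ref{sec: ES CBB} requires.
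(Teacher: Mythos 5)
Your overall architecture --- countable stability of Hausdorff dimension, truncation of \(A\) and \(B\) to a large closed ball viewed as a compact Alexandrov surface with boundary, and a separate lower bound from a nondegenerate connected arc --- is exactly the paper's strategy, and your localization bookkeeping (convexity of the ball, projections landing in the larger ball, disjointness of the compact truncations being automatic) is sound. The lower bound via \(\scrH^1 \geq \diam\) for a connected set with two points is also fine.

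There is, however, one genuine gap in the upper bound: you deduce \(\dim_\scrH E(A',B') \leq 1\) from Theorem \ref{thm: simplicial}, which only asserts that \(E(A',B')\) is \emph{homeomorphic} to a finite closed simplicial \(1\)-complex. Hausdorff dimension is not a topological invariant --- the Koch curve is homeomorphic to a segment yet has dimension \(\log_3 4\), and indeed the paper's own Koch snowflake example in this very section shows that a set homeomorphic to a circle can have dimension exceeding \(1\). What you actually need is the metric statement that \(E(A',B')\) has finite \(\scrH^1\)-measure, which is Corollary \ref{cor: finite measure}; that corollary rests on the rectifiability of the Jordan arcs (Lemma \ref{lem: rectifiable} and Corollary \ref{cor: arc}), not merely on the topological \(1\)-complex structure. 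This is precisely the citation the paper uses, so the repair is a one-line substitution, but as written the step ``finite simplicial \(1\)-complex, hence Hausdorff dimension at most \(1\)'' does not follow.
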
 

\begin{proof}
By the countable stability of the Hausdorff dimension \cite[p. 29]{falconer2}, 
\[ \dim_\scrH \bigcup_{n=1}^\infty F_n = \sup_{1\leq n < \infty} \dim_\scrH F_n \] 
so it is sufficient to show that the equidistant set is the union of countably many \(1\)-dimensional sets. 
Letting \(x\in E(A,B)\) and \(n\geq 1\) be given, we will show that \(\overline{B}_n(x) \cap E(A,B)\) is \(1\)-dimensional, which implies that  
\begin{align*} 
\dim_\scrH E(A,B) & = \dim_\scrH \bigcup_{n=1}^\infty \Big( \overline{B}_n (x) \cap E(A,B) \Big) 
\\ & = \sup_{1\leq n < \infty} \dim_\scrH \overline{B}_n (x) \cap E(A,B) 
\\ & = 1 . 
\end{align*}

First, following the same reasoning as in Theorem \ref{thm: line or circle}, we will consider the compact subspace \(\overline{B}_{R}(x)\), where 
\(R > 2n+d(x,A)\), 
so that the equidistant set determined by the disjoint compact sets  
\[A_x = A\cap \overline{B}_{R}(x) \quand B_x = B \cap \overline{B}_{R}(x) \] 
satisfies \(E(A,B) \cap \overline{B}_n(x) = E(A_x,B_x) \cap \overline{B}_n(x)\). 
It follows from Corollary \ref{cor: finite measure} that \(E(A_x,B_x)\) has Hausdorff dimension \(1\) and finite measure. 
Given that \(E(A,B) \cap \overline{B}_n(x)\) contains some rectifiable curve, 
\[ 0 < \mathscr{H}^1 \big( E(A,B) \cap \overline{B}_n(x) \big) \leq \mathscr{H}^1 \big( E(A_x, B_x ) \big) \] 
so \(E(A,B) \cap \overline{B}_n(x)\) has Hausdorff dimension \(1\). 
\end{proof}


\bibliography{metricgeometry.bib} 
\bibliographystyle{alpha}

\end{document}